\newlength{\figwidth}
\newlength{\figheight}
\newlength{\philwidth}
\def\mbb#1{\mathbb{#1}}
\def\lrs#1{\ensuremath{\left[{#1}\right]}}%
\def\lrv#1{\ensuremath{\lvert{#1}\rvert}}%
\def\blrv#1{\ensuremath{\Bigg\lvert{#1}\Bigg\rvert}}%
\newcommand{\norm}[1]{\ensuremath{\|#1\|}}
\newcommand{\ZBZ}{{\mathbb Z}^B_0}
\newcommand{\babs}[1]{\blrv{#1}}
\newcommand{\abs}[1]{\lrv{#1}}%
\newcommand{\half}{\frac{1}{2}}
\def\Exp#1{\mbb{E}[{#1}]}%
\def\Prblr#1{\mbb{P}\kern-2pt\lrs{{#1}}}%
\newcommand{\perm}[2]{\left(\begin{array}{c}
                             #1 \\ #2
                             \end{array} \right)}
\newcommand{\prob}[1]{{\mathbb P}\{#1\}}
\newcommand{\floor}[1]{\lfloor #1 \rfloor}
\newcommand{\lc}{\left\{}
\newcommand{\rc}{\right\}}
\newcommand{\lb}{\left(}
\newcommand{\rb}{\right)}
\newcommand{\ls}{\left[}
\newcommand{\rs}{\right]}
\newcommand{\cmplx}{\mathbb C}
\newtheorem{cor}{Corollary}
\newtheorem{remark}{Remark}
\newtheorem{prop}{Proposition}
\newtheorem{lemma}{Lemma}
\newtheorem{theorem}{Theorem}
\newtheorem{definition}{Definition}
\newtheorem{example}{Example}
\newtheorem{conjecture}{Conjecture}
\definecolor{darkorange}{rgb}{1.00,0.50,0.00}
\begin{document}

\title{Eigenvalue Results for Large Scale Random Vandermonde Matrices with Unit Complex Entries}
\author{Gabriel H. Tucci and Philip A. Whiting
\thanks{
G.H. Tucci and P.A. Whiting
  are with Bell Laboratories,
  Alcatel--Lucent, 600 Mountain Ave, Murray Hill, NJ 07974.
  E-mail: gabriel.tucci@alcatel-lucent.com, pwhiting@research.bell-labs.com.}}

\maketitle

\begin{abstract}
This paper centers on the limit eigenvalue distribution
for random Vandermonde matrices with unit magnitude complex
entries. The phases of the entries are chosen independently and
identically distributed from the interval $[-\pi,\pi]$.  Various types
of distribution for the phase are considered and we establish the existence of the 
empirical eigenvalue distribution in the large matrix limit on a wide range of cases.
The rate of growth of the maximum eigenvalue is examined and shown to be no greater than
$O(\log N)$ and no slower than $O(\log N/\log\log N)$ where $N$ is the dimension of the matrix. Additional results include the
existence of the capacity of the Vandermonde channel (limit integral
for the expected log determinant).

\end{abstract}

\begin{IEEEkeywords}
Random matrices, eigenvalues, limiting distribution, Vandermonde matrices
\end{IEEEkeywords}

\section{Introduction}%
\label{sec_intro}%
In this paper we consider random Vandermonde matrices with unit magnitude complex entries. Such matrices are defined as follows,
an $N\times L$ rectangular matrix ${\bf V}$ with unit complex entries
is a {\em Vandermonde matrix} if  there exist phases $\theta_1,\cdots,\theta_L \in [-\pi,\pi]$ such that
\begin{equation}
\label{eqn_Vandermondedefn}
{\bf V} := \frac{1}{\sqrt{N}}
\lb
\begin{array}{lcl}
1              & \ldots & 1 \\
e^{-j\theta_1} & \ldots & e^{-j\theta_L} \\
\vdots         & \vdots & \vdots \\
e^{-j(N-1)\theta_1} & \ldots & e^{-j(N-1)\theta_L} 
\end{array}
\rb.
\end{equation}
A random Vandermonde matrix is produced if the entries of the phase vector $\theta:= \lb \theta_1,\cdots, \theta_L \rb \in [-\pi,\pi]^L$
are  random variables. For the purposes of this paper it will be assumed that the phase vector has i.i.d. components, with distribution 
drawn according to a measure $\nu$ which has a density on $[-\pi,\pi]$. In other words, the measure $\nu$ is absolutely continuous with respect to the Lebesgue measure on the interval $[-\pi,\pi]$.

Central to this paper is the probability law of the eigenvalues
of the matrix ${\bf V V^*}$, or equivalently ${\bf V^{*}V}$, and related matrices. This work provides results for the empirical eigenvalue distribution of the random Vandermonde matrix, in particular the existence of the limit measure. The behaviour of the maximum eigenvalue is also considered and asymptotic upper and lower bounds are obtained in this case. 

Random Vandermonde matrices are a natural construction with a wide range of potential applications in such fields as finance \cite{Norberg}, signal processing \cite{SP}, wireless communications \cite{Porst}, statistical analysis \cite{Anderson}, security \cite{Sampaio} and medicine \cite{Strohmer}. This stems from the close relationship that unit magnitude complex Vandermonde matrices have with the discrete Fourier transform.  Amongst these is an important recent application for signal reconstruction using noisy samples (see \cite{SP}) where an asymptotic estimate is obtained for mean squared error. This asymptotic can be calculated as a random eigenvalue expectation, whose limit distribution depends on the signal dimension $d$.  In the case $d=1$ the limit is via random Vandermonde matrices. As $d\rightarrow \infty$ the Marchenko--Pastur limit distribution is shown to apply. Further applications were treated in \cite{GC02} including source identification and wavelength estimation.

The article \cite{GC02} is the principal reference to date for eigenvalue analysis of random Vandermonde matrices and this paper
builds on it. The results mentioned above represent some directions in which analysis of random Vandermonde matrices were progressed. In particular 
our results show the existence of the empirical eigenvalue measure for certain unbounded densities. We are aware of no published results for the maximum eigenvalue. 

The rest of the paper is as follows. In Section \ref{sec:rand_mat_ess} we give a brief overview of random matrix theory. This is followed
by Section \ref{sec:prelim} which sets up notation and presents some preliminary results which are used later in the paper.
Section \ref{Main} is used to prove the existence of the limit measure for a wide class of phase distributions with a density.
This is done via the well known method of moments \cite{Billingsley} which establishes the existence of the limit measure via Carleman's
theorem. The expansion coefficient limits are characterized both as integrals of products of sinc function as well as probabilities of
certain events involving independent uniform random variables. Combinatorial formulas are also given for the expansion coefficient
of certain classes of partitions. 

This is followed by the upper and lower bounds for the maximum eigenvalue of random Vandermonde matrices, see Section \ref{sec_maxeig}. In Section \ref{sec_lowerbnd} a conjecture for the lower bound on the Vandermonde expansion coefficients is stated. We also show some of the implications of this conjecture on the lower bound on the moment sequence. Applications are discussed in Section \ref{applications}. Finally numerical results are presented in Section \ref{sec_numerfin} and some conclusions are given. The remainder of the proof details are in the appendices.

\section{Random Matrix Essentials}
\label{sec:rand_mat_ess}
Throughout the paper we will denote by ${\bf A}^{*}$ the complex conjugate transpose of the matrix ${\bf A}$. ${\bf I}_{N}$ will represent the $N\times N$ identity matrix. We let $\mathrm{Tr}$ be the non--normalized trace for square matrices, defined by,
$$
\mathrm{Tr}({\bf A}):=\sum_{i=1}^{N}{a_{ii}},
$$
where $a_{ii}$ are the diagonal elements of the $N\times N$ matrix ${\bf A}$. We also let $\mathrm{tr}_{N}$ be the normalized trace, defined by $\mathrm{tr}_{N}({\bf A})=\frac{1}{N}\mathrm{Tr}({\bf A})$. Given two sequences of numbers $\{a_n\}_n$ and $\{b_{n}\}_n$ we say that $a_n\asymp b_{n}$ if $\lim_{n\to\infty}\frac{a_n}{b_n}=1$.

\par Let us consider a sequence $\{{\bf A}_{N}\}_{N\in\mathbb{N}}$ of selfadjoint $N\times N$ random matrices ${\bf A}_{N}$. In which sense can we talk about the limit of these matrices? It is evident that such a limit does not exist as an $\infty\times\infty$ matrix and there is no convergence in the usual topologies. What converges and survives in the limit are the moments of the random matrices. Let ${\bf A_{N}}=(a_{ij}(\omega))_{i,j=1}^{N}$ where the entries $a_{ij}$ are random variables on some probability space $\Omega$ equipped with a probability measure $P$. Therefore,
\begin{equation}
 \mathbb{E}(\mathrm{tr_{N}}({\bf A}_{N})):=\frac{1}{N}\sum_{i=1}^{N} \int_{\Omega}{a_{ii}(\omega)\,dP(\omega)}
\end{equation}
and we can talk about the $k$--th moment $\mathbb{E}(\mathrm{tr_{N}}({\bf A}_{N}^{k}))$ of our random matrix ${\bf A}_{N}$, and it is well known that for nice random matrix ensembles these moments converge as $N\to\infty$. So let us denote by $\alpha_{k}$ the limit of the $k$--th moment,
\begin{equation}
\alpha_{k}:=\lim_{N\to\infty}{\mathbb{E}(\mathrm{tr_{N}}({\bf A}_{N}^{k}))}.
\end{equation}
Thus we can say that the limit consists exactly of the collection of all these numbers $\alpha_{k}$. However, instead of talking about a collection of numbers we prefer to identify these numbers as moments of some random variable ${\bf A}$. Now we can say that our random matrices ${\bf A}_{N}$ converge to a random variable ${\bf A}$ in distribution (which just means that the moments of ${\bf A}_{N}$ converge to the moments of ${\bf A}$). We will denote this by ${\bf A}_{N}\to {\bf A}$.

\par One should note that for a selfadjoint $N\times N$ matrix ${\bf A} = {\bf A}^{*}$, the collection of moments corresponds also to a probability measure $\mu_{A}$ on the real line, determined by 
\begin{equation}
 \mathrm{tr}_{N}({\bf A}^{k})=\int_{\mathbb{R}}{t^{k}\,d\mu_{A}(t)}.
\end{equation}
This measure is given by the eigenvalue distribution of ${\bf A}$, i.e. it puts mass $\frac{1}{N}$ on each of the eigenvalues of ${\bf A}$ (counted with multiplicity):
\begin{equation}
 \mu_{{\bf A}}=\frac{1}{N}\sum_{i=1}^{N}{\delta_{\lambda_{i}}},
\end{equation}
where $\lambda_{1},\ldots,\lambda_{N}$ are the eigenvalues of ${\bf A}$. In the same way, for a random matrix ${\bf A}$, $\mu_{{\bf A}}$ is given by the averaged eigenvalue distribution of ${\bf A}$. Thus, moments of random matrices with respect to the averaged trace contain exactly that type of information in which one is usually interested when dealing with random matrices.

\begin{example} Let us consider the basic example of random matrix theory. Let ${\bf G}_{N}$ be the usual selfadjoint
Gaussian $N\times N$ random matrix (i.e., entries above the diagonal are independently and normally distributed). Then the famous theorem of
Wigner can be stated in our language in the form that
\begin{equation}\label{semicircular}
 {\bf G}_{N}\to s, \hspace{0.4cm}\text{where $s$ is a semicircular random variable},
\end{equation}
where semicircular just means that the measure $\mu_{s}$ is given by the semicircular distribution (or, equivalently, the even moments of the
variable $s$ are given by the Catalan numbers).
\end{example}

\par The empirical cumulative eigenvalue distribution function of an $N\times N$ selfadjoint random matrix ${\bf A}$ is defined by the random function 
$$
F_{{\bf A}}^{N}(\omega, x):=\frac{\#\{k\,:\,\lambda_{k}\leq x\}}{N}
$$
where $\lambda_{k}$ are the (random) eigenvalues of ${\bf A}(\omega)$ for each realization $\omega$. For each $\omega$ this function determines a probability measure $\mu_{N}(\omega)$ supported on the real line. These measures $\{\mu_{N}(\omega)\}_{\omega}$ define a Borel measure $\mu_{N}$ in the following way. Let $B\subset\mathbb{R}$ be a Borel subset then
$$
\mu_{N}(B):=\mathbb{E}\Big(\mu_{N}(\omega)(B)\Big).
$$

\par A new and crucial concept, however, appears if we go over from the case of one variable to the case of more variables. 
\vspace{0.3cm}
\begin{definition}
Consider $N\times N$ random matrices ${\bf A}_{N}^{(1)},\ldots,{\bf A}_{N}^{(m)}$ and variables $A_{1},\ldots,A_{m}$ (living in some abstract algebra $\mathcal{A}$ equipped with a state $\varphi$). We say that 
\begin{equation*}
 ({\bf A}_{N}^{(1)},\ldots,{\bf A}_{N}^{(m)})\to (A_{1},\ldots,A_{m})
\end{equation*}
in distribution if and only if 
\begin{equation}
 \lim_{N\to\infty}{\mathbb{E}\Big(\mathrm{tr_{N}}\Big({\bf A}_{N}^{(i_1)}\cdots {\bf A}_{N}^{(i_k)}\Big)\Big)}=\varphi(A^{(i_1)}\cdots A^{(i_k)})
\end{equation}
for all choices of $k$, $1\leq i_{1},\ldots, i_{k}\leq m$.
\end{definition}

\vspace{0.3cm}
The $A_{1},\ldots,A_{m}$ arising in the limit of random matrices are a priori abstract elements in some algebra $\mathcal{A}$, but it is good to know
that in many cases they can also be concretely realized by some kind of creation and annihilation operators on a full Fock space \cite{Voi2}. Indeed,
free probability theory was introduced by Voiculescu for investigating the structure of special operator algebras generated by these type of
operators. In the beginning, free probability had nothing to do with random matrices.
\vspace{0.3cm}
\begin{example}
Let us now consider the example of two independent Gaussian random matrices ${\bf G}_{N}^{(1)},{\bf G}_{N}^{(2)}$ (i.e., each of them is a selfadjoint
Gaussian random matrix and all entries of ${\bf G}_N^{(1)}$ are independent from all entries of ${\bf G}_{N}^{(2)}$ ). Then one knows that all joint moments converge, and we can say that 
\begin{equation}
({\bf G}_{N}^{(1)},{\bf G}_{N}^{(2)})\to (s_1,s_2),
\end{equation} 
where Wigner's Theorem tells us that both $s_1$ and $s_2$ are semicircular. The question is: What is the relation between $s_1$ and $s_2$? Does the independence between ${\bf G}_N^{(1)}$ and ${\bf G}_{N}^{(2)}$ survive in some form also in the limit? The answer is yes and is
provided by a basic theorem of Voiculescu which says that $s_1$ and $s_2$ are free. For a formal definition of freeness and more results in free probability see \cite{Voi2}, \cite{Voi1}, \cite{Speicher}, \cite{Speicher2} and \cite{Verdu}.
\end{example}

\par Let $\{{\bf D}_{r}(N)\}_{r=1}^{n}$ be a set of non--random diagonal $L\times L$ matrices, where we implicitly assume that $\frac{L}{N}\to c$. 
As we previously discuss the family has a joint limit distribution as $N\to\infty$ if the limit
\begin{equation}
 D_{i_{1},\ldots,i_{s}}=\lim_{N\to\infty}{\mathrm{tr_{L}}\Big({\bf D}_{i_1}(N)\cdots {\bf D}_{i_s}(N) \Big)}
\end{equation}
exists for all choices of $i_{1},\ldots,i_{s}\in\{1,\ldots,n\}$.

\section{Vandermonde Expansion Coefficients}
\label{sec:prelim}
\label{Main}
\par We will denote by $\mathcal{P}(n)$ the set of all partitions of $\{1,\ldots,n\}$, and use $\rho$ as a notation for a partition in $\mathcal{P}(n)$. Also, we will write $\rho=\{B_{1},\ldots,B_{r}\}$, where $B_{j}$ will be used to denote the blocks of $\rho$ and $r$ is the number of blocks in $\rho$. Let $\mathcal{I}_{j}=\{i_{j1},i_{j2},\ldots,i_{j|B_j|}\}$ be the set of elements in the block $B_{j}$. We denote by $D_{\rho}$ 
\begin{equation}
 D_{\rho}:=\prod_{j=1}^{k}{D_{B_j}}
\end{equation}
where
\begin{equation}
 D_{B_{j}}=D_{i_{j1},i_{j2},\ldots,i_{j|B_j|}}.
\end{equation}

\par Consider an $N\times L$ random Vandermonde matrix with unit complex entries as given in equation (\ref{eqn_Vandermondedefn}). We will be considering the case where the phases $\theta_{1},\ldots,\theta_{L}$ are independent and identically distributed taking values in $[-\pi,\pi]$. The variables $\theta_{i}$ will be called the phase distributions and we will denote by $\nu$ their probability distribution. It was proved in \cite{GC02} that if $d\nu=f(x)\,dx$ for $f(x)$ continuous in $[-\pi,\pi]$ then the matrices ${\bf V^{*}V}$ have finite asymptotic moments. In other words, the limit 
\begin{equation}
m_{n}^{(c)}=\lim_{N\to\infty}{\mathbb{E}\Big[\mathrm{tr_{L}}\Big(({\bf V^{*}V})^n \Big)\Big]}
\end{equation}
exists for all $n\geq 0$. Moreover, $m_{n}^{(c)}$ is equal to 
\begin{equation}
m_{n}^{(c)}=\sum_{\rho\in\mathcal{P}(n)}{K_{\rho,\nu}c^{|\rho|-1}}
\end{equation}
where $K_{\rho,\nu}$ are positive numbers indexed by the partitions $\rho$. We call these numbers {\it Vandermonde expansion coefficients}. 

The fact that all the moments exists is not enough to guarantee that there exists a limit probability measure having these moments. However, we will prove that in this case this is true. In other words, the matrices ${\bf V^{*}V}$ converge in distribution to a probability measure $\mu_{c}$ supported on $[0,\infty)$ where $c=\lim{\frac{L}{N}}$ as the dimension grows. More precisely, let $\mu_{L}$ be the empirical eigenvalue distribution of the $L\times L$ random matrices ${\bf V^{*}V}$. Then $\mu_{L}$ converge weakly to a unique probability measure $\mu_{c}$ supported in $[0,+\infty)$ with moments 
$$
m_{n}^{(c)}=\int_{0}^{\infty}{t^{n}\,d\mu_{c}(t)}.
$$ 
We also enlarge the class of functions for where the limit eigenvalue distribution exists to include unbounded densities and we found lower bounds and upper bounds for the maximum eigenvalue.

Even though these numbers are indexed by the set of partition $\mathcal{P}(n)$ they do not have the properties of classical cumulants (these latter are the coefficients of the log characteristic function). Instead these numbers are weights which tell us how a partition in the moment formula should be computed. They are also to be distinguished from the free cumulants in free probability theory which are the coefficients of the $R$--transform (see \cite{Voi1} and \cite{Voi2} for more details on this).

\par  Let $\rho$ be a partition of the set $\{1,2,\ldots,n\}$ with $r$ blocks. Consider the map $\rho:\{1,2,\ldots,n\}\to\{1,2,\ldots,r\}$ such that $\rho(i)=k$ if and only if $i$ is in block $k$. The analysis done in \cite{GC02} and \cite{SP} with respect to these coefficients can be summarized as follows,
\begin{equation}
 K_{\rho,u}=\lim_{N}{K_{\rho,u,N}}
\end{equation}
where 
\begin{eqnarray}\label{eqq}
K_{\rho,u,N} & = & \frac{1}{N^{n+1}}\cdot\frac{N!}{(2\pi)^{r}(N-r)!}\cdot \\
& \cdot & \int_{[-\pi,\pi]^{r}}{F_{\rho}(x_{1},\ldots,x_{r})\,dx_{1}\ldots dx_{r}} \nonumber 
\end{eqnarray}
with 
$$
F_{\rho}(x_{1},\ldots,x_{r}):=\prod_{j=1}^{n}{F(x_{\rho(j)}-x_{\rho(j+1)})}
$$
and
$$
F(x)=\frac{\sin(\frac{N}{2}x)}{\sin(\frac{x}{2})}.
$$

\vspace{0.2cm}
\par Consider a random Vandermonde matrix defined as before with phase distribution $\nu$ concentrated in the interval $[-\pi,\pi]$.  For
each $q\geq 1$ let $L^{q}([-\pi,\pi])$ be the class of all the $q$--integrable functions with respect to Lebesgue measure. We will
denote by $\mathcal{L}:=\cap_{q\geq 1}{L^{r}([-\pi,\pi])}$ the space of all $q$--integrable functions. Note that
$L^{\infty}([-\pi,\pi])\subsetneq\mathcal{L}$. Further denote by ${\cal A}$ those probability measures supported on $[-\pi,\pi]$ with non--negative Fourier series coefficients. More specifically, 
$$
\phi(m):=\int_{-\pi}^{\pi}{e^{-imt}\,d\nu(t)}\geq 0
$$
for all $m\in\mathbb{Z}$.

\vspace{0.3cm}

\begin{theorem}\label{t1}
If $\nu$ is a probability measure which is absolutely continuous with respect to Lebesgue measure and with probability
density $p$ with $p\in\mathcal{L}\cap {\cal A}$, then for each partition $\rho$ the expansion coefficient $K_{\rho,\nu}$ exists.  Moreover, if $\rho$ is a partition of $n$ elements with $r$ blocks then
\begin{equation}
 K_{\rho,\nu}=K_{\rho,u}\cdot (2\pi)^{(r-1)}\cdot\int_{-\pi}^{\pi}{p(x)^r\,dx}
\end{equation}
where $K_{\rho,u}$ is the expansion coefficient with respect to the uniform distribution.
\end{theorem}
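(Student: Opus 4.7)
The plan is to derive an exact Fourier representation of $K_{\rho,\nu,N}$ as a lattice sum indexed by $\mathbf{m}\in\mathbb{Z}^{r}$ with $\sum_{i}m_{i}=0$, and then pass to the large-$N$ limit term-by-term. The correction factor $(2\pi)^{r-1}\int p^{r}dx$ will emerge via Parseval as $\sum_{\sum m_{i}=0}\prod_{i}\phi(m_{i})$, so the identity reduces to showing that the lattice coefficients asymptote correctly. The hypothesis $p\in\mathcal{A}$ (non-negative $\phi$) provides positivity of every summand and is the key input for dominated convergence, whereas $p\in\mathcal{L}$ ensures $\int p^{r}dx<\infty$ and hence summability of the would-be majorant.

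First, repeating the tensor expansion that produced (\ref{eqq}) but with the uniform weight $(2\pi)^{-r}$ replaced by $\prod_{i}p(x_{i})$ gives
\[
K_{\rho,\nu,N}=\frac{N!}{N^{n+1}(N-r)!}\int_{[-\pi,\pi]^{r}}F_{\rho}(\mathbf{x})\prod_{i=1}^{r}p(x_{i})\,d\mathbf{x}.
\]
Using the Dirichlet expansion $F(x)=\sum_{k\in\mathcal{S}_{N}}e^{ikx}$ over a symmetric set $\mathcal{S}_{N}$ of $N$ consecutive values (integers or half-integers according to the parity of $N$), one has $F_{\rho}(\mathbf{x})=\sum_{\mathbf{k}\in\mathcal{S}_{N}^{n}}\exp\!\bigl(i\sum_{i}K_{i}(\mathbf{k})x_{i}\bigr)$, where each $K_{i}(\mathbf{k})$ is a signed sum of the components $k_{j}$ dictated by $\rho$ and automatically satisfies $\sum_{i}K_{i}(\mathbf{k})=0$. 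Since $\phi(m)=\int e^{-imx}p(x)\,dx$ is real and even under the $\mathcal{A}$ hypothesis, integrating against $\prod_{i}p(x_{i})$ and then grouping by the value $\mathbf{m}=K(\mathbf{k})$ yields
\[
\int F_{\rho}\prod_{i}p\,d\mathbf{x}=\sum_{\mathbf{m}:\sum m_{i}=0}|B_{N}^{\mathbf{m}}|\prod_{i=1}^{r}\phi(m_{i}),\qquad B_{N}^{\mathbf{m}}:=\{\mathbf{k}\in\mathcal{S}_{N}^{n}:K(\mathbf{k})=\mathbf{m}\}.
\]
The uniform case $p=1/(2\pi)$ selects only $\mathbf{m}=\mathbf{0}$, recovering $K_{\rho,u,N}=\frac{N!}{N^{n+1}(N-r)!}|B_{N}^{\mathbf{0}}|$, and a direct Parseval calculation gives $\sum_{\sum m_{i}=0}\prod_{i}\phi(m_{i})=(2\pi)^{r-1}\int_{-\pi}^{\pi}p(x)^{r}dx$, which is precisely the correction factor we want.

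Next, I would establish the pointwise limit $|B_{N}^{\mathbf{m}}|/|B_{N}^{\mathbf{0}}|\to 1$ for each fixed $\mathbf{m}$: writing $B_{N}^{\mathbf{m}}$ as a bounded translate of the kernel lattice intersected with the cube $\mathcal{S}_{N}^{n}$, the translation perturbs only an $O(N^{n-r})$ boundary while $|B_{N}^{\mathbf{0}}|\sim K_{\rho,u}N^{n-r+1}$ by the uniform-case asymptotic of \cite{GC02}. Dominated convergence on $\{\mathbf{m}:\sum m_{i}=0\}$ then delivers $|B_{N}^{\mathbf{0}}|^{-1}\int F_{\rho}\prod_{i}p\,d\mathbf{x}\to(2\pi)^{r-1}\int p^{r}dx$, and multiplying by the known limit $\frac{N!}{N^{n+1}(N-r)!}|B_{N}^{\mathbf{0}}|\to K_{\rho,u}$ gives the stated formula. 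The main obstacle is producing a uniform majorant $|B_{N}^{\mathbf{m}}|/|B_{N}^{\mathbf{0}}|\leq C$ valid for all $\mathbf{m}$ and $N$, so that the dominated-convergence step closes cleanly. The continuous analogue---that central slices of a centrally symmetric convex body maximize $(n-r+1)$-dimensional volume---follows from Brunn--Minkowski, but transferring the inequality to lattice counts requires either an Ehrhart-type correction, with the boundary error of order $N^{n-r}$ absorbed into the leading term, or a direct rearrangement argument exploiting the non-negativity and evenness of $\phi$. Controlling the tail where $|\mathbf{m}|$ is permitted to grow with $N$ is where the bulk of the technical effort will lie.
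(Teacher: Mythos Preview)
Your overall architecture is the same as the paper's: write $K_{\rho,\nu,N}$ as a lattice sum over $\mathbf{m}\in\mathbb{Z}_{0}^{r}$ with summand $G_{N}^{\rho}(\mathbf{m})\prod_{i}\phi(m_{i})$, establish pointwise convergence $G_{N}^{\rho}(\mathbf{m})\to K_{\rho,u}$ for each fixed $\mathbf{m}$, and then pass the limit through the sum by dominated convergence, the majorant being $\prod_{i}\phi(m_{i})$ whose total mass is $(2\pi)^{r-1}\int p^{r}\,dx$. The hypotheses $p\in\mathcal{A}$ (non-negativity of $\phi$) and $p\in\mathcal{L}$ (finiteness of $\int p^{r}$) enter exactly where you say they do.

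Where you go astray is in flagging the uniform bound on $|B_{N}^{\mathbf{m}}|/|B_{N}^{\mathbf{0}}|$ as the ``main obstacle'' and reaching for Brunn--Minkowski or Ehrhart-type machinery. This step is in fact trivial, and the paper disposes of it in one line. The system $K(\mathbf{k})=\mathbf{m}$ has rank $r-1$ in the $n$ unknowns $k_{1},\ldots,k_{n}$; by the elementary elimination you already invoke (the paper's Lemma~\ref{lem_partsolve}), $r-1$ coordinates are uniquely determined once the remaining $n-r+1$ are chosen. Hence $|B_{N}^{\mathbf{m}}|\leq N^{\,n-r+1}$ for every $\mathbf{m}$ and every $N$, i.e.\ $G_{N}^{\rho}(\mathbf{m})\leq 1$ uniformly. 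Since $G_{N}^{\rho}(\mathbf{0})\to K_{\rho,u}>0$, your ratio is bounded by $2/K_{\rho,u}$ for all large $N$, or, more cleanly, one just uses $G_{N}^{\rho}(\mathbf{m})\leq 1$ directly and takes $\prod_{i}\phi(m_{i})$ itself as the dominating sequence. No convex-geometry input is needed; the tail where $|\mathbf{m}|$ grows with $N$ is already controlled because the majorant is independent of $N$.
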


\vspace{0.3cm}
\noindent The proof of this Theorem can be found in the Appendix.

\vspace{0.3cm}
\begin{remark}
A close look at the proof of this Theorem shows it is enough for the proof to work that the following holds,
\begin{equation}
\label{eqn_dominate}
\sum_{{\bf m}\in\mathbb{Z}^{k}_{0}} \prod_{n=1}^k \vert a(m_n) \vert < \infty
\end{equation}
for every $k\geq 1$ where 
$$
\mathbb{Z}^{k}_{0}:=\{(m_1,\ldots,m_k)\,:\,m_1+\ldots+m_k=0\}
$$ 
and 
$$
a(m):=\mathbb{E}(e^{im\theta_{k}})=\int_{-\pi}^{\pi}{e^{imx}p(x)\,dx}.
$$
The case $k=1$ is trivial and $k=2$ is essentially Parseval's identity. However, for $k\geq 3$ the condition of $p$ being in $\mathcal{L}$ alone is necessary but not sufficient as was pointed out in \cite{Tao}. Consider for instance a random sign function at scale $\frac{1}{N}$ for some $N$, i.e. a function of the form 
$$
f(x)=\sum_{n=1}^{N-1}{\epsilon_n \mathbf{1}_{[n/N,(n+1)/N)}}
$$ 
for some i.i.d. signs $\epsilon_n$. This function is bounded in every $L^{q}$ but Khintchine's inequality tells us that the first $N$ Fourier coefficients of this function are of size $\frac{1}{\sqrt{N}}$ on the average, which will lead to the  divergence of the above series for $k\geq 3$ in the limit $N\to\infty$. One can formalize this divergence by creating a suitable linear combination of the above samples over all $N$, or appealing to the uniform boundedness principle.
\end{remark}

\vspace{0.2cm}
\par The following proposition give us an alternative way to compute the Vandermonde expansion coefficients.

\vspace{0.2cm}
\begin{prop}\label{p1} 
The Vandermonde mixed moment expansion coefficient $K_{\rho,u}$ can also be written as
 \begin{equation}
  K_{\rho,u}=\int_{\mathbb{R}^{(r-1)}}{G_{\rho}(x_{1},\ldots,x_{r})\,dx_{1}\ldots dx_{r-1}}
 \end{equation}

\noindent where 
\begin{equation}
 G_{\rho}(x_{1},\ldots,x_{r}):=\prod_{j=1}^{n}{G(x_{\rho(j)}-x_{\rho(j+1)})}
\end{equation}
and
$$G(x)=\frac{\sin(\pi x)}{\pi x}$$
\end{prop}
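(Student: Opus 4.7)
The plan is to obtain the formula directly from the finite-$N$ integral representation (\ref{eqq}) by rescaling the integration variables by a factor of $N$, exploiting the translation invariance of the resulting integrand, and then passing to the limit. The Dirichlet-type kernel $F$ will rescale into a sinc function $G$ up to a correction factor tending to $1$, and the various $N$-dependent prefactors will conspire to cancel a single inverse power of $N$ produced by integrating out one translational degree of freedom.

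First I would substitute $y_i:=Nx_i/(2\pi)$, which maps $[-\pi,\pi]^r$ onto $[-N/2,N/2]^r$ with Jacobian $(2\pi/N)^r$ and transforms the kernel as
\begin{equation*}
F(x_i - x_j) = \frac{\sin(\pi(y_i - y_j))}{\sin(\pi(y_i - y_j)/N)} = N\,G(y_i - y_j)\,h_N(y_i - y_j),
\end{equation*}
with $h_N(u):=(\pi u/N)/\sin(\pi u/N)\to 1$ pointwise. Combining the $n$ factors of $N$ from the $n$ sinc kernels, the Jacobian $(2\pi/N)^r$, the asymptotic $N!/(N-r)!\sim N^r$, and the explicit prefactor in (\ref{eqq}) leaves
\begin{equation*}
K_{\rho,u,N}=\frac{N!}{N^{r+1}(N-r)!}\int_{[-N/2,N/2]^r}G_\rho(y)\prod_{j=1}^n h_N(y_{\rho(j)}-y_{\rho(j+1)})\,dy,
\end{equation*}
whose front coefficient is asymptotic to $1/N$.

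Second, I would collapse the translational degree of freedom. The change $z_i:=y_i - y_r$ for $i<r$ and $z_r:=y_r$ makes the integrand independent of $z_r$, since $G_\rho$ and each $h_N$ depend only on differences; for fixed $z_1,\ldots,z_{r-1}$ the admissible range of $z_r$ is an interval of asymptotic length $N$. Integrating out $z_r$ therefore produces a factor asymptotic to $N$ that cancels the $1/N$ in front exactly, and in the limit one obtains
\begin{equation*}
K_{\rho,u}=\int_{\mathbb{R}^{r-1}}G_\rho(z_1,\ldots,z_{r-1},0)\,dz_1\,\ldots\,dz_{r-1},
\end{equation*}
which is the claim.

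The main obstacle is justifying the passage to the limit under the integral sign. Pointwise convergence is immediate from $h_N\to 1$, and the bound $|\sin(x/2)|\ge |x|/\pi$ on $[-\pi,\pi]$ gives $|F(x_i-x_j)|\le (\pi/4)N|G(y_i-y_j)|$, which controls the integrand uniformly by a constant times $|G_\rho|$; however, products of sinc functions need not be absolutely integrable on $\mathbb{R}^{r-1}$. To circumvent this I would pass through the Fourier representation $G(x)=\int_{-1/2}^{1/2}e^{2\pi ixt}\,dt$, which rewrites $\int_{\mathbb{R}^{r-1}}G_\rho\,dz$ as an integral over the compact cube $[-1/2,1/2]^n$ of delta functions enforcing the linear cycle conditions determined by $\rho$; this representation establishes absolute convergence and matches cleanly with the finite-$N$ expression, completing the argument.
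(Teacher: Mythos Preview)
Your derivation is correct and is precisely the argument the paper has in mind: the paper's entire proof is the one-line remark ``The proof of this proposition follows easily from equation (\ref{eqq}),'' and your rescaling $y_i=Nx_i/(2\pi)$ together with the translation-invariance reduction is the natural way to unpack that remark. If anything, you have been more careful than the paper, which does not address the interchange of limit and integral at all; your Fourier-side workaround via $G(x)=\int_{-1/2}^{1/2}e^{2\pi i x t}\,dt$ is a clean way to supply that missing justification and in fact dovetails with the probabilistic description of $K_{\rho,u}$ the paper uses later.
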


\vspace{0.2cm}
\noindent The proof of this proposition follows easily from equation (\ref{eqq}).

\vspace{0.2cm}
\begin{remark}
Note that in the previous Proposition we need to integrate only with respect to $r-1$ variables even though $G$ is a function of $r$ variables. The reason for this is that $G$ is a function of the differences.
\end{remark}

\vspace{0.3cm}
\noindent The following is an example of a family of probability distributions that do not belong to the family $\mathcal{L}$, see also \cite{GC02}.
\vspace{0.3cm}
\begin{example} Let $0<\alpha<1$ and let 
\begin{equation}
\label{eqn_badpdf}
p_{\alpha}(x):=\frac{1-\alpha}{2\pi^{1-\alpha}}\cdot\frac{1}{\abs{x}^{\alpha}}\cdot\mathbf{1}_{[-\pi,\pi]}(x).
\end{equation}
Then it is easy to see that 
$$
\int_{-\pi}^{\pi}{p(x)\,dx}=1
$$
and 
$$
\int_{-\pi}^{\pi}{p(x)^{n}\,dx}<+\infty \iff 0\leq n<\frac{1}{\alpha}.
$$
The density is depicted in Figure 
\ref{fig_palpha},
\begin{figure}[!Ht]
  \begin{center}
\hspace{-0.2in}
    \includegraphics[width=3.50in]{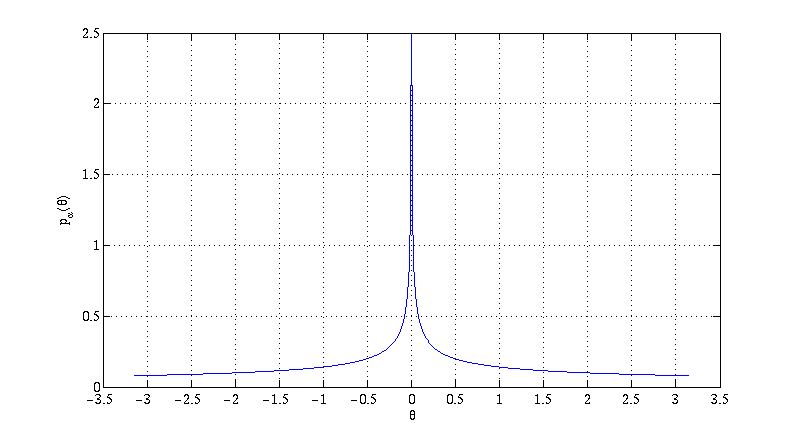}
    \caption{Density $p_{\frac{1}{2}}(\theta)$ as in Example 3. As we observed this density does not meet the convergence criterion.}
    \label{fig_palpha}
  \end{center}
\end{figure}
for $\alpha = \half$, so the limiting expansion coefficient does not exist for $n \geq 2$.
\end{example}

\vspace{0.3cm}
\noindent We now provide an example of an unbounded pdf living in the family $\mathcal{L}\cap\mathcal{A}$ for which Theorem \ref{t1} applies.

\vspace{0.3cm}

\begin{lemma}
\label{lemma_logpdf}
Suppose that $f\in L^{1}([-\pi,\pi])$ is a symmetric density around 0, convex and decreasing on $(0,\pi]$. Then the Fourier series coefficients are non--negative. 
\end{lemma}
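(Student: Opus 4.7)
My plan is to exploit symmetry to reduce the complex Fourier coefficient to a cosine integral on $(0,\pi)$, then use a single integration by parts to convert the monotonicity of $f$ into an integral of $\sin(mx)$ against a non-negative non-increasing function, and finally fold the interval into pieces of length $\pi/m$ so that the sine factor and an alternating sum both have a definite sign.

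First, symmetry of $f$ gives
\[
\phi(m)=\int_{-\pi}^{\pi}e^{-imx}f(x)\,dx=2\int_{0}^{\pi}\cos(mx)\,f(x)\,dx,
\]
so $\phi(0)\geq 0$ is immediate from $f\geq 0$ and, by parity, I only need to treat $m\geq 1$. Since $f$ is convex and decreasing on $(0,\pi]$, it is absolutely continuous on every compact subinterval, and its right derivative is non-decreasing and non-positive, so $g:=-f'$ is non-negative and non-increasing on $(0,\pi]$. The hypothesis $f\in L^{1}$ together with $f$ decreasing yields $xf(x)\to 0$ as $x\to 0^{+}$ (via the bound $\int_{x/2}^{x}f(t)\,dt\geq (x/2)f(x)$), so the boundary terms in the integration by parts vanish and I obtain
\[
\int_{0}^{\pi}\cos(mx)\,f(x)\,dx=\frac{1}{m}\int_{0}^{\pi}\sin(mx)\,g(x)\,dx.
\]

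Next I would partition $(0,\pi)$ into $m$ subintervals of length $\pi/m$ and use $\sin(m(y+k\pi/m))=(-1)^{k}\sin(my)$ to fold everything onto $(0,\pi/m)$:
\[
\int_{0}^{\pi}\sin(mx)\,g(x)\,dx=\int_{0}^{\pi/m}\sin(my)\sum_{k=0}^{m-1}(-1)^{k}g\!\left(y+\tfrac{k\pi}{m}\right)dy.
\]
On $(0,\pi/m)$ the factor $\sin(my)$ is non-negative, and the alternating sum can be grouped into consecutive pairs $g(y+2j\pi/m)-g(y+(2j+1)\pi/m)$, each of which is $\geq 0$ by the monotonicity of $g$; when $m$ is odd there is a leftover term $g(y+(m-1)\pi/m)\geq 0$. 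Hence the integrand is pointwise non-negative, yielding $\phi(m)\geq 0$.

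The main obstacle is purely technical: $f$ and $g$ may blow up at $0$, so one has to check that the integration by parts is legitimate and that every integral converges absolutely. This is handled by the $L^{1}$ assumption together with the monotonicity: a Fubini computation gives
\[
\int_{0}^{\pi}x\,g(x)\,dx=\int_{0}^{\pi}f(x)\,dx-\pi f(\pi)<\infty,
\]
which dominates $|\sin(mx)\,g(x)|\leq mx\,g(x)$ uniformly in $m$ and therefore legitimizes both the integration by parts and the folding step above.
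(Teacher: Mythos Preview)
Your proof is correct and follows essentially the same route as the paper: reduce by symmetry to a cosine integral on $(0,\pi)$, integrate by parts once (using $xf(x)\to 0$ from $f\in L^1$ to kill the boundary term), and then use that $-f'$ is non-negative and non-increasing to conclude positivity. The paper simply asserts this last step in one line, whereas you spell it out via the folding of $(0,\pi)$ into $m$ pieces of length $\pi/m$ and the alternating-sum pairing; your added justification that $\int_0^\pi x\,g(x)\,dx<\infty$ is also a welcome technical point the paper glosses over.
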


\begin{proof}
The Fourier series coefficients exist since $f$ is an integrable function by hypothesis. By the symmetry hypothesis the Fourier coefficients are real and equal to $2\int_{0}^{\pi}{f(x)\cos(mx)\,dx}$ for $m\in\mathbb{Z}$. Note that since $f(x)$ is even it is enough to verify the claim for $m\geq 1$. Since convex functions are differentiable a.e., see \cite{Rockafeller}, integration by parts applies yielding
\begin{eqnarray*} 
\int_{0}^{\pi} {f(x)\cos(mx)\,dx}  & = &\ls f(x)\frac{\sin(mx)}{m} \rs_0^\pi \\ 
& - & \int_{0}^{\pi}{f'(x)\frac{\sin(mx)}{m}\,dx}.\\
\end{eqnarray*}
The first term is 0 under the assumption that $f\in L^{1}$ ($\lim_{x\to 0}{xf(x)}=0$) and the last
term is positive since $-f'(x) \geq 0$ and it is decreasing due to the convexity.
\end{proof}

\vspace{0.3cm}

\begin{example}
We now show that the unbounded pdf defined as
\begin{equation}
\label{eqn_goodpdf}
f(x):= \frac{1}{2\pi} \log \frac{\pi}{\vert x \vert}
\end{equation}
does satisfy
$$
\int_{-\pi}^\pi  (f(x))^n dx < \infty,
$$
for all $n\geq 0$. To see this first note that $f$  is convex on $(0,\pi]$. It follows that the infinite Riemann sums for
$x_k =\frac{1}{k}, k=1,2,3,\cdots$ give rise to the following double 
inequality for the integral of $f$,
$$
\sum_{k=1}^\infty \frac{f(k)^n}{k(k+1)}
< \int_0^\pi (f(x))^n \,dx <  \sum_{k=1}^\infty
\frac{f(k+1)^n}{k(k+1)} < \infty
$$
for all $n\geq 0$. Therefore $f\in\mathcal{L}$. Applying the previous Lemma we see immediately that $f$ also belongs to the family $\mathcal{A}$ and hence Theorem \ref{t1} applies to the probability distribution with density $f(x)$. Moreover we can compute its Fourier coefficients as 
$$
\widehat{f}(m):=\frac{1}{\pi}\int_{0}^{\pi}{\log\frac{\pi}{x}\cos(mx)\,dx}=\frac{\mathrm{Si}(|m|\pi)}{|m|\pi}
$$
where  
$$
\mathrm{Si}(t):=\int_{0}^{t}{\frac{\sin(x)}{x}\,dx}>0
$$
for all $t>0$. The density itself is depicted in Figure \ref{fig_logpdf}.
\begin{figure}[!Ht]
  \begin{center}
  \hspace{-0.2in}
    \includegraphics[width=3.5in]{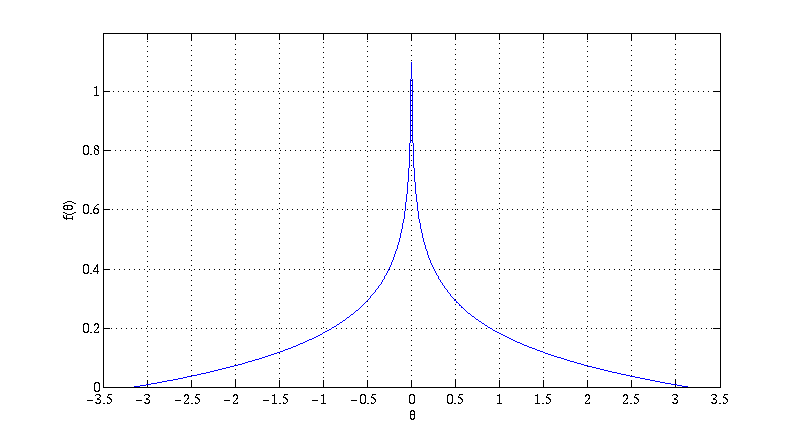}
    \caption{Unbounded density $f(\theta)$ as in Example 4 for which the limit eigenvalue distribution exists.}
    \label{fig_logpdf}
  \end{center}
\end{figure}
\end{example}

\vspace{0.3cm}
\noindent An important consequence of Theorem \ref{t1} is that it 
gives the uniform phase distribution a distinctive role. See also \cite{GC02} for an alternative proof of the following Proposition.

\vspace{0.3cm}
\begin{prop}
Let ${\bf V}_{\nu}$ denote a random Vandermonde matrix with phase distribution $\nu$, and set 
$$
m_{\nu,n}:=\lim_{N\to\infty}{\mathbb{E}\Big[\mathrm{tr}_{L}\Big( ({\bf V}^{*}_{\nu}{\bf V}_{\nu})^{n}\Big) \Big]}.
$$
Then we have that for all $n\geq 1$
$$m_{u,n}\leq m_{\nu,n}.$$
\end{prop}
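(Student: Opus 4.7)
The plan is to reduce the inequality to Jensen's inequality applied blockwise. By Theorem~\ref{t1}, for every partition $\rho$ of $n$ with $r$ blocks,
\begin{equation*}
K_{\rho,\nu} \;=\; K_{\rho,u}\cdot (2\pi)^{r-1}\cdot \int_{-\pi}^{\pi} p(x)^{r}\,dx.
\end{equation*}
Thus to compare $m_{\nu,n}$ and $m_{u,n}$ term by term in the moment expansion $m_n^{(c)}=\sum_{\rho\in\mathcal{P}(n)} K_{\rho,\bullet}\,c^{|\rho|-1}$, it suffices to show $K_{\rho,\nu}\geq K_{\rho,u}$ for each $\rho$, since $K_{\rho,u}\geq 0$ (it is the limit of the non-negative quantities $K_{\rho,u,N}$) and $c>0$.

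The inequality $K_{\rho,\nu}\geq K_{\rho,u}$ will follow from showing
\begin{equation*}
(2\pi)^{r-1}\int_{-\pi}^{\pi} p(x)^{r}\,dx \;\geq\; 1.
\end{equation*}
First I would rewrite the left-hand side as $2\pi \cdot \frac{1}{2\pi}\int_{-\pi}^{\pi} p(x)^{r}\,dx \cdot (2\pi)^{r-1}$, so the key step is
\begin{equation*}
\frac{1}{2\pi}\int_{-\pi}^{\pi} p(x)^{r}\,dx \;\geq\; \left(\frac{1}{2\pi}\int_{-\pi}^{\pi} p(x)\,dx\right)^{r} \;=\; (2\pi)^{-r},
\end{equation*}
which is Jensen's inequality applied to the convex map $t\mapsto t^{r}$ (for $r\geq 1$) with respect to the uniform probability measure $dx/(2\pi)$ on $[-\pi,\pi]$, together with the fact that $p$ is a density. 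Multiplying by $(2\pi)^{r}$ gives the required bound.

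Combining these two steps, $K_{\rho,\nu}\geq K_{\rho,u}$ for every $\rho\in\mathcal{P}(n)$, and summing the weighted expansion yields
\begin{equation*}
m_{\nu,n} \;=\; \sum_{\rho\in\mathcal{P}(n)} K_{\rho,\nu}\,c^{|\rho|-1} \;\geq\; \sum_{\rho\in\mathcal{P}(n)} K_{\rho,u}\,c^{|\rho|-1} \;=\; m_{u,n},
\end{equation*}
as claimed. There is no real obstacle here beyond identifying the correct convexity inequality; the only subtlety to flag is that Theorem~\ref{t1} (hence the product formula for $K_{\rho,\nu}$) requires $p\in\mathcal{L}\cap\mathcal{A}$, so one should state the proposition under the standing hypothesis that $\nu$ lies in the class for which the limit moments exist in the form given above.
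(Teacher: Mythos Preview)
Your proof is correct and follows essentially the same route as the paper: reduce to $K_{\rho,\nu}\geq K_{\rho,u}$ via the product formula of Theorem~\ref{t1}, and obtain $(2\pi)^{r-1}\int p^{r}\geq 1$ from Jensen's inequality for $t\mapsto t^{r}$ against the uniform measure on $[-\pi,\pi]$. Your closing remark about the standing hypothesis $p\in\mathcal{L}\cap\mathcal{A}$ is apt and is implicit in the paper's argument as well.
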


\begin{proof}
It is enough to prove that for every partitions $\rho\in\mathcal{P}(n)$, with an arbitrary number of blocks $r$,  $K_{\rho,u}\leq K_{\rho,\nu}$. 
By the previous Theorem 
$$
K_{\rho, \nu}=K_{\rho,u}\cdot (2\pi)^{(r-1)}\cdot\int_{-\pi}^{\pi}{p(x)^r\,dx}.
$$
By Jensen's inequality 
$$
\frac{1}{2\pi}\int_{-\pi}^{\pi}{p(x)^{r}\,dx}=\mathbb{E}\big( p(x)^{r}\big)\geq \mathbb{E}\big(p(x)\big)^{r}=\Bigg(\frac{1}{2\pi}\Bigg)^{r}.
$$
Therefore, $(2\pi)^{r-1}\int_{-\pi}^{\pi}{p(x)^{r}\,dx}\geq 1$ and it follows that $K_{\rho,u}\leq K_{\rho,\nu}$.
\end{proof}

\vspace{0.3cm}
\noindent Using Theorem \ref{t1} and Theorem 1 from \cite{GC02} we can state the following result.

\vspace{0.3cm}
\begin{theorem}
Assume that $\{{\bf D}_{r}(N)\}_{1 \leq r\leq n}$ have a joint limit distribution as $N\to\infty$. Assume also that $\nu$ satisfies the hypothesis of Theorem \ref{t1}. Then the limit 
$$
M_{n}=\lim_{N}{\mathbb{E}\Big[\mathrm{tr}_{L}\Big( {\bf D}_{1}(N){\bf V^{*}V}D_{2}(N){\bf V^{*}V}\ldots {\bf D}_{n}(N){\bf V^{*}V} \Big)\Big]}
$$
exists when $\frac{L}{N}\to c$ and is equal to 
\begin{equation}
 \sum_{\rho\in\mathcal{P}(n)}{K_{\rho,\nu}c^{|\rho|-1}D_{\rho}}.
\end{equation}
\end{theorem}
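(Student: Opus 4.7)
The plan is to reduce the statement to Theorem~\ref{t1} (and the Vandermonde-expansion analysis it rests on) by expanding the trace entrywise, grouping index tuples by the partition they induce, and using the joint limit of $\{{\bf D}_r(N)\}$ blockwise. Since the diagonal matrices are deterministic, one first writes
\begin{equation*}
\mathbb{E}\bigl[\mathrm{tr}_L({\bf D}_1 {\bf V^{*}V}\cdots {\bf D}_n {\bf V^{*}V})\bigr]=\frac{1}{L}\sum_{i_1,\ldots,i_n=1}^{L}\Bigl(\prod_{r=1}^{n}({\bf D}_r(N))_{i_ri_r}\Bigr)\,\mathbb{E}\Bigl[\prod_{r=1}^{n}({\bf V^{*}V})_{i_ri_{r+1}}\Bigr],
\end{equation*}
with cyclic index arithmetic ($i_{n+1}=i_1$). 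For each $\rho\in\mathcal{P}(n)$, restrict the sum to tuples whose equality pattern is exactly $\rho$, i.e.\ $r\sim_\rho s\iff i_r=i_s$.

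Because the phases $\theta_1,\ldots,\theta_L$ are i.i.d., the Vandermonde expectation depends only on $\rho$; call this common value $V_\rho(N)$. The computation that produces $K_{\rho,u,N}$ in~(\ref{eqq}) together with its extension to a general density $p\in\mathcal{L}\cap\mathcal{A}$ provided by Theorem~\ref{t1} shows that, after accounting for the $\frac{N!}{(N-r)!}\sim N^{r}$ combinatorics built into (\ref{eqq}),
\begin{equation*}
V_\rho(N)=\frac{K_{\rho,\nu,N}}{N^{|\rho|-1}}(1+o(1))\longrightarrow \frac{K_{\rho,\nu}}{N^{|\rho|-1}}\quad\text{in the appropriate sense as }N\to\infty.
\end{equation*}
Labelling the blocks of $\rho$ as $B_1,\ldots,B_{|\rho|}$ and letting $j_B\in\{1,\ldots,L\}$ denote the common index value on block $B$, the diagonal factor summed over \emph{distinct} $(j_{B_1},\ldots,j_{B_{|\rho|}})$ differs from the unrestricted product
\begin{equation*}
\prod_{B\in\rho}\Bigl[\sum_{j_B=1}^{L}\prod_{s\in B}({\bf D}_s(N))_{j_Bj_B}\Bigr]=L^{|\rho|}\prod_{B\in\rho}\mathrm{tr}_L\Bigl(\prod_{s\in B}{\bf D}_s(N)\Bigr)
\end{equation*}
by configurations in which at least two blocks collide; these contribute $O(L^{|\rho|-1})$ and vanish after the outer $1/L$ combined with the $N^{-(|\rho|-1)}$ from $V_\rho(N)$, using the boundedness of $\mathrm{tr}_L(\prod_{s\in B}{\bf D}_s(N))$ that is implied by the joint-limit hypothesis.

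Combining these pieces, the contribution of a fixed partition $\rho$ with $r=|\rho|$ blocks satisfies
\begin{equation*}
\frac{L^{r}}{L}\cdot V_\rho(N)\cdot\prod_{B\in\rho}\mathrm{tr}_L\Bigl(\prod_{s\in B}{\bf D}_s(N)\Bigr)\;\longrightarrow\;c^{\,r-1}\,K_{\rho,\nu}\,D_\rho,
\end{equation*}
where the factor $L^{r-1}/N^{r-1}\to c^{r-1}$ and the joint-limit hypothesis identifies each block trace as $D_B$, whence $\prod_{B}D_B=D_\rho$ by definition. Summing over the finitely many partitions $\rho\in\mathcal{P}(n)$ and interchanging the (finite) sum with the limit gives the claim.

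\textbf{Main obstacle.} The only real subtlety is the normalization bookkeeping: tracking how the factor $L^{|\rho|-1}/N^{|\rho|-1}\to c^{|\rho|-1}$ emerges once one unwinds the $N!/((2\pi)^{r}(N-r)!)$ and $N^{-(n+1)}$ prefactors in the definition of $K_{\rho,u,N}$, and verifying that the "distinct indices" restriction on the diagonal sum can be dropped in the limit. The latter requires only uniform boundedness of $\mathrm{tr}_L(\prod_{s\in B}{\bf D}_s(N))$ in $N$, which is built into the joint-limit hypothesis. The separation of diagonal and Vandermonde factors under the expectation is immediate because $\{{\bf D}_r(N)\}$ are non-random, and the partition-dependence of the Vandermonde factor is already packaged by Theorem~\ref{t1}.
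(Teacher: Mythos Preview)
Your proposal is correct and is essentially the argument the paper invokes: the paper itself does not give a proof but simply states that the result follows from Theorem~\ref{t1} together with Theorem~1 of \cite{GC02}, and what you have written is precisely the combinatorial trace expansion underlying that reference --- group index tuples by their equality partition $\rho$, factor out the deterministic diagonal contribution blockwise, identify the Vandermonde expectation with $K_{\rho,\nu}/N^{|\rho|-1}$, and pass to the limit. One small point worth making explicit: when you bound the ``collision'' corrections in the diagonal sum you need boundedness not just of $\mathrm{tr}_L\bigl(\prod_{s\in B}{\bf D}_s(N)\bigr)$ for the blocks $B\in\rho$, but also of the traces associated to merged blocks (coarser partitions); this too is covered by the joint-limit hypothesis, since that hypothesis asserts convergence of $\mathrm{tr}_L({\bf D}_{i_1}(N)\cdots{\bf D}_{i_s}(N))$ for \emph{all} choices of indices.
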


\vspace{0.2cm}
In the following examples we will compute $K_{\rho,u}$ for some families of partitions.

\vspace{0.3cm}
\begin{example}
Let $n$ even and let $\rho$ be the partition 
$$
\rho=\{\{1,3,\ldots, n-1\},\{2,4,\ldots,n\} \}.
$$ 
Then by Proposition \ref{p1}
$$K_{\rho,u}=\int_{-\infty}^{+\infty}{G(x-y)^{\frac{n}{2}}G(y-x)^{\frac{n}{2}}\,dx}=\Big( G^{\frac{n}{2}}\ast G^{\frac{n}{2}} \Big)(0)$$

\noindent where $G(x)=\frac{\sin(\pi x)}{\pi x}$. Alternatively, since $G(x)=G(-x)$ we can write 
$$K_{\rho,u}=\int_{-\infty}^{+\infty}{\Bigg( \frac{\sin(\pi x)}{\pi x} \Bigg)^{n}\,dx}.$$

\noindent This integral can be easily computed giving us the following result:
\begin{equation}
K_{\rho,u}=\frac{1}{(n-1)!}\cdot\sum_{k=0}^{\frac{n}{2}-1}{(-1)^{k}{n\choose k}\Big(\frac{n}{2}-k\Big)^{n-1}}
\end{equation}

\noindent Evaluating the last expression we see that $K_{\{ \{1,3\}, \{2,4\}\},u}=\frac{2}{3}$ and $K_{\{ \{1,3,5\},\{2,4,6\}\},u}=\frac{11}{20}$.
\end{example}

\vspace{0.3cm}

\begin{example}
Let $n\geq 1$ and consider the partition 
$$
\rho=\{\{1,n+1\}, \{2,n+2\},\ldots \{n,2n\}\}
$$ 
then $\rho\in\mathcal{P}(2n)$ and $|\rho|=n$. Then
\begin{eqnarray*}
K_{\rho,u} & = & \int_{\mathbb{R}^{n-1}}{\prod_{i=1}^{n}{G(x_{i}-x_{i+1})^{2}}\,dx_{1}\ldots d{x_{n-1}}}\\
& = &\Big( G^{2}\ast G^{2}\ast\ldots\ast G^{2}\Big)(0).
\end{eqnarray*}

\noindent Using the fact that the Fourier transform of $G^{2}(x)$ is the triangular function $\mathrm{tri}(t):=\max(1-|t|,0)$ and elementary properties of the Fourier transform we see that 
$$
K_{\rho,u}=\int_{-\infty}^{+\infty}{(\mathrm{tri}(t))^{n}\,dt}=\frac{2}{n+1}
$$
for all $n\geq 1$.
\end{example}

\section{Bell Numbers, Harper's Theorem and Existence of the Limiting Measure}
\label{Bell}
The numbers $B_n$ are defined to be the Bell numbers, i.e. the number of possible ways in which the set $\lc 1, \cdots, n \rc$ (or any set of size $n$) can be 
partitioned into distinct subsets. Further define $s(k,n)$ to be the Stirling numbers of the second kind, i.e. the number of partitions
of a set of size $n$ into $k$ subsets. By definition 
$$
\sum_{k=1}^{n}{s(k,n)} = B_n.
$$

If we normalize the $s(k,n)$ by $B_n$ we obtain a probability distribution. The following result establishes the asymptotic normality of this
distribution.
\begin{theorem}{[Harper]}\cite{Harper}\label{Harper}
The Stirling numbers of the second kind are asymptotically normal in the sense that 
$$B_{n}^{-1}\sum_{j=1}^{x_{n}}{s(j,n)}\to \frac{1}{\sqrt{2\pi}}\cdot\int_{-\infty}^{x}{e^{-\frac{t^2}{2}}\,dt}\hspace{0.3cm}\text{as}\,\,\,\,n\to +\infty$$
\noindent where $x_{n}=\sigma_{n}x+(B_{n+1}/B_{n}-1)$ and $\sigma_{n}=\big(B_{n+2}/B_{n}-(B_{n+1}/B_{n})^{2}-1\big)^{\frac{1}{2}}$.

\vspace{0.3cm}
\noindent Moreover, if we define $J_{n}$ the integer such that $s(J_{n},n):=\max_{1\leq i\leq n}{s(i,n)}$ then $J_{n}\asymp \frac{n}{e\log{n}}$ and 
$$s(J_{n},n)\asymp \frac{1}{\sqrt{2\pi}}\cdot\frac{B_{n}}{\sigma_{n}}.$$
\end{theorem}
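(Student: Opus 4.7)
The plan is to follow Harper's original argument, which converts the question into a Lindeberg--Feller central limit theorem for a sum of independent non-identically distributed Bernoullis. The key algebraic fact I would establish first is that the row polynomial
\begin{equation*}
P_n(x) := \sum_{k=1}^{n} s(k,n)\, x^{k}
\end{equation*}
has only real nonpositive roots. This follows by induction from the recurrence $P_n(x) = x\lrb{P_{n-1}(x) + P'_{n-1}(x)}$ (itself a direct consequence of $s(k,n) = k\,s(k,n-1) + s(k-1,n-1)$): given that $P_{n-1}$ has $n-1$ real nonpositive roots, Rolle's theorem interlaces them with the roots of $P'_{n-1}$, and a short sign analysis on $P_{n-1}+P'_{n-1}$ yields $n$ real nonpositive roots for $P_n$.

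Once real-rootedness is in hand I would factor $P_n(x) = x\prod_{i=1}^{n-1}(x+\alpha_i)$ with $\alpha_i\ge 0$ and set $p_i := 1/(1+\alpha_i) \in [0,1]$. Evaluating at $x=1$ gives $P_n(1)=\prod_i(1+\alpha_i)=B_n$, and dividing through shows that the probability generating function of the distribution $k\mapsto s(k,n)/B_n$ equals $x\prod_i(p_i x+1-p_i)$; in other words that distribution is the law of $1+X_1+\cdots+X_{n-1}$ for independent Bernoullis $X_i$ with parameters $p_i$. Differentiating $\log P_n$ at $x=1$ and using the identities $P_n'(1)=B_{n+1}-B_n$ and $P_n''(1)=B_{n+2}-3B_{n+1}+B_n$ (both obtained by iterating the recurrence once or twice) then yields mean $B_{n+1}/B_n - 1$ and variance exactly $\sigma_n^2 = B_{n+2}/B_n - (B_{n+1}/B_n)^2 - 1$, matching the centering and scaling in the statement.

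With this probabilistic representation in place, the first half of the theorem is an application of the Lindeberg CLT to $X_1+\cdots+X_{n-1}$. Each summand has variance at most $\tfrac14$, so the Lindeberg condition is automatic once I show $\sigma_n\to\infty$; this is the main analytic step, which I would handle by invoking the Moser--Wyman saddle-point asymptotics for the Bell numbers. Those give $B_{n+1}/B_n \asymp n/W(n)$ (with $W$ the Lambert function) and, after computing the next-order term in the asymptotic expansion, $B_{n+2}/B_n-(B_{n+1}/B_n)^2 \to\infty$. The stated convergence of $B_n^{-1}\sum_{j\le x_n}s(j,n)$ to $\Phi(x)$ is then the CLT applied after recentering by the mean and rescaling by $\sigma_n$.

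For the mode, the product form of the PGF makes the mass function $s(\cdot,n)/B_n$ log-concave and hence unimodal, so the maximizer $J_n$ lies within $O(1)$ of the mean $B_{n+1}/B_n - 1$; substituting $B_{n+1}/B_n\sim n/W(n)$ with the refinement $W(n)=\log n-\log\log n+o(1)$ gives the claimed $J_n\asymp n/(e\log n)$. The peak estimate $s(J_n,n)\asymp B_n/(\sqrt{2\pi}\,\sigma_n)$ is the local companion of the integral CLT: for a log-concave mass function supported on a window of size $\Theta(\sigma_n)$ around its mean, unimodality forces the maximum mass to match the height of the limiting Gaussian density, namely $1/(\sqrt{2\pi}\,\sigma_n)$, up to a $1+o(1)$ factor. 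The principal obstacle throughout is the quantitative control of $\sigma_n$ and $B_{n+1}/B_n$, which genuinely requires the Moser--Wyman saddle-point estimate rather than any elementary bound.
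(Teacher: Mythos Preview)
The paper does not supply a proof of this theorem at all; it is quoted from Harper's 1967 paper and used as a black box (the only follow-up is Remark~\ref{r3} extracting $\sigma_n^{1/n}\to 1$ from a lemma in \cite{Harper}). So there is nothing in the paper to compare your argument against.

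That said, your outline is precisely Harper's original proof: real-rootedness of $P_n(x)=\sum_k s(k,n)x^k$ via the recurrence $P_n=x(P_{n-1}+P_{n-1}')$, the resulting representation of $s(\cdot,n)/B_n$ as the law of a sum of independent Bernoullis, identification of mean and variance through $P_n'(1)$ and $P_n''(1)$, and then Lindeberg once $\sigma_n\to\infty$. The only caveat is your derivation of $J_n\asymp n/(e\log n)$: from $J_n\sim B_{n+1}/B_n\sim n/W(n)$ and $W(n)=\log n-\log\log n+o(1)$ you get $J_n\sim n/\log n$, not $n/(e\log n)$; the extra factor $e$ in the paper's statement is not what your argument produces, so either the asymptotic in the statement carries a typo or a different normalization is intended, and you should flag this rather than claim your computation yields it.
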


\vspace{0.3cm}

\begin{remark}\label{r3}
We would like also to point out that as a Corollary of Lemma 2 in \cite{Harper} 
$$
\lim_{n}{\sigma_{n}^{\frac{1}{n}}}=1.
$$ 
This result will be used later on this work.
\end{remark}

\vspace{0.3cm}

An estimate of de Brujin (1981), and also an estimate of Moser and Wyman \cite{55MoserWyman} states that,
\begin{equation*}
B_{n}\asymp \mathrm{exp}\Bigg(n\log n-n\log\log n -n + \frac{n}{\log n}\log\log n + \frac{n}{\log n}\Bigg).
\end{equation*}

\noindent It follows that
$$
B_n^{\,\frac{1}{n}}\asymp \mathrm{exp}\Bigg(\log n-\log\log n -1 + \frac{\log\log n}{\log n} + \frac{1}{\log n}\Bigg)\leq n.
$$

\par Let ${\bf V}$ be an $N\times L$ random Vandermonde matrix with phase probability distribution $\nu$. Assume also that $\nu$ is absolutely continuous with respect to Lebesgue measure and with continuous density $p(x)$. It was proved in Theorem 3 of \cite{GC02} that the Vandermonde expansion coefficient $K_{\rho,\nu}$ exists and that 
\begin{equation}
m_{\nu,n}^{(c)}:=\lim_{N\to\infty}{\mathbb{E}\Big[\mathrm{tr_{L}}\Big(({\bf V^{*}V})^n \Big)\Big]}=\sum_{\rho\in\mathcal{P}(n)}{K_{\rho,\nu}c^{|\rho|-1}}
\end{equation}
for all $n\geq 1$. However, this is not enough to guarantee the existence or uniqueness of probability measure supported in $[0,+\infty)$ having these moments.  Let $\rho\in\mathcal{P}(n)$ be a partition with $r$ blocks then
$$
K_{\rho,\nu}=K_{\rho,u}\cdot (2\pi)^{(r-1)}\cdot\int_{-\pi}^{\pi}{p(x)^r\,dx}\leq (2\pi\cdot\norm{p}_{\infty})^{n}
$$ 
since $0<K_{\rho,u}\leq 1$. Therefore, 
$$
m_{\nu,n}^{(c)}\leq (2\pi\cdot\norm{p}_{\infty}\cdot\max\{c,1\})^{n}\cdot B_{n}
$$
for all $n\geq 1$.
Let $C=(2\pi\cdot\norm{p}_{\infty}\cdot\max\{c,1\})^{\frac{1}{2}}$ and define 
$$
\beta_{n}:=\inf_{k\geq n}{\Big(m_{\nu,k}^{(c)}\Big)^{\frac{1}{2k}}}\leq C\cdot\inf_{k\geq n}{B_{k}^{\frac{1}{2k}}}\leq C\cdot\inf_{k\geq n}{\sqrt{k}}=C\cdot\sqrt{n}.$$

\noindent Hence, $\beta_{n}^{-1}\geq C^{-1}\cdot n^{-1/2}$ and therefore
$$
\sum_{n=1}^{+\infty}{\beta_{n}^{-1}}=+\infty.
$$
Therefore, by Carleman's Theorem \cite{Carleman} there exists a unique probability measure $\mu_{c}$ supported on $[0,+\infty)$ such that 
$$
m_{\nu,n}^{(c)}=\int_{0}^{+\infty}{t^{n}\,d\mu_{c}(t)},\hspace{0.4cm}\forall n\geq 1.
$$
In other words,  the sequence $m_{\nu,n}^{(c)}$ is {\em distribution determining}. Indeed, let $\mu_L$ be the empirical eigenvalue measure distribution of the $L\times L$ matrix ${\bf V^{*}V}$. We have thus proved the following result.

\vspace{0.3cm}

\begin{theorem}
\label{thm_limitdistn}
The sequence of distributions $\mu_L$ converge to a unique limiting distribution $\mu_{c}$ for which all positive moments exist and if $\Lambda$ is a positive random variable with distribution $\mu_{c}$ then for all $n\geq 1$,
$$
\Exp{\Lambda^n} = m_{\nu,n}^{(c)}.
$$
\end{theorem}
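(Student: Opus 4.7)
The plan is to combine the explicit moment formula established through Theorem 1 with a growth estimate that permits application of Carleman's criterion, and then to upgrade convergence of moments to weak convergence via the classical method of moments. Existence and finiteness of each $m_{\nu,n}^{(c)}$ is already given; what remains is (i) to show that this moment sequence arises from a \emph{unique} probability measure on $[0,\infty)$, and (ii) to deduce weak convergence $\mu_L\to\mu_c$ from the convergence of every polynomial moment.

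First I would establish a uniform bound on the expansion coefficients. For $\rho\in\mathcal{P}(n)$ with $r$ blocks, Theorem \ref{t1} gives
$$
K_{\rho,\nu} \;=\; K_{\rho,u}\cdot(2\pi)^{r-1}\cdot\int_{-\pi}^{\pi} p(x)^r\,dx,
$$
and since $0<K_{\rho,u}\leq 1$ and $\int p^r\,dx\leq \|p\|_\infty^{r-1}\int p\,dx=\|p\|_\infty^{r-1}$, I obtain $K_{\rho,\nu}\leq (2\pi\|p\|_\infty)^{n}$. Summing the moment expansion $m_{\nu,n}^{(c)}=\sum_{\rho\in\mathcal{P}(n)}K_{\rho,\nu}c^{|\rho|-1}$ over all $B_n$ partitions then yields
$$
m_{\nu,n}^{(c)} \;\leq\; \bigl(2\pi\|p\|_\infty\max\{c,1\}\bigr)^{n}\,B_n.
$$

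Next I would verify Carleman's condition. Using the de~Bruijn--Moser--Wyman asymptotic stated earlier, which gives $B_n^{1/n}\leq n$ for large $n$, and setting $C:=(2\pi\|p\|_\infty\max\{c,1\})^{1/2}$, the quantity $\beta_n:=\inf_{k\geq n}\bigl(m_{\nu,k}^{(c)}\bigr)^{1/(2k)}$ is bounded by $C\sqrt{n}$, so
$$
\sum_{n\geq 1}\beta_n^{-1} \;\geq\; C^{-1}\sum_{n\geq 1}n^{-1/2} \;=\; +\infty.
$$
By Carleman's theorem this forces the existence of a \emph{unique} Borel probability measure $\mu_c$ on $[0,\infty)$ whose $n$-th moment equals $m_{\nu,n}^{(c)}$ for every $n\geq 1$. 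With $\mu_c$ moment-determined, the classical Fr\'echet--Shohat theorem (method of moments) applies: since $\int t^n\,d\mu_L(t)=\mathbb{E}[\mathrm{tr}_L(({\bf V}^*{\bf V})^n)]\to m_{\nu,n}^{(c)}=\int t^n\,d\mu_c(t)$ for every $n\geq 0$, the measures $\mu_L$ converge weakly to $\mu_c$.

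The main obstacle, such as it is, lies in the bound $K_{\rho,u}\leq 1$: this needs to be justified from the integral representation in Proposition \ref{p1} or from the probabilistic interpretation of Vandermonde expansion coefficients as limiting probabilities, but it is not proved in display earlier. A second mild subtlety is that the limit measure has unbounded support, so one cannot invoke compactness of probability measures on a compact interval directly; however, Carleman's condition is precisely the tool designed to handle this case on $[0,\infty)$, so no further work is required beyond the moment growth estimate above.
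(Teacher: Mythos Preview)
Your proposal is correct and follows essentially the same argument as the paper: bound $K_{\rho,\nu}\le(2\pi\|p\|_\infty)^n$ via $K_{\rho,u}\le 1$, deduce $m_{\nu,n}^{(c)}\le(2\pi\|p\|_\infty\max\{c,1\})^nB_n$, use $B_n^{1/n}\le n$ to verify Carleman's condition, and conclude by the method of moments. Your explicit invocation of Fr\'echet--Shohat for the final weak-convergence step is slightly more precise than the paper's phrasing, and your flagged concern about $K_{\rho,u}\le 1$ is addressed earlier in the paper (it is the probabilistic interpretation of $K_{\rho,u}$ as a volume, attributed to \cite{GC02}).
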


\section{Maximum Eigenvalue}
\label{sec_maxeig}
Let ${\bf V}$ be a square $N\times N$ random Vandermonde matrix with phase distribution $\nu$. In this Section we will focus on the study of growth of the maximum eigenvalue of the matrix ${\bf V^{*}V}$ as a function of $N$. More specifically, we know that the matrix ${\bf V^{*}V}$ is a positive definite $N\times N$ random matrix with eigenvalues $\{\lambda_{1}\leq\lambda_{2}\leq\ldots\leq\lambda_{N}\}$. It is clear that $\lambda_{N}$ is a random variable with values in the interval $[0,N]$. The value $N$ is taken by the random variable $\lambda_{N}$ in the event in which all the phases in the random matrix are equal ($\theta_{1}=\theta_{2}=\ldots=\theta_{N}$) and this event has zero probability. First we will prove an upper bound on the expectation $\mathbb{E}[\lambda_{N}]$.

\vspace{0.2cm}

\noindent Note that, of course, such a study does not rely on the existence of a limiting measure. 

\subsection{Upper bound}
\label{max_upper}

We first note that the matrix ${\bf V^{*}V}$ has the same eigenvalues as the matrix 
$$
{\bf X}_{N}=\Bigg( \frac{\sin(\frac{N}{2}(\theta_{i}-\theta_{j}))}{N\sin(\frac{\theta_{i}-\theta_{j}}{2})}\Bigg)_{1\leq i,j\leq N}.
$$
See Appendix A for a proof of this statement. It is a well known result in Linear Algebra (see \cite{Horn}) that 
$$
\lambda_{N}\leq (\max_{i}\{c_{i}\}\max_{i}\{r_{i}\})^{\frac{1}{2}}
$$ 
where 
$c_i$ and $r_i$ are the $l_{1}$ norms of the columns and rows of the matrix. In this particular case,
$$
r_{i}:=\sum_{j=1}^{N}{\babs{\frac{\sin(\frac{N}{2}(\theta_{i}-\theta_{j}))}{N\sin(\frac{\theta_{i}-\theta_{j}}{2})}}}
$$ 
and 
$$c_{i}:=\sum_{j=1}^{N}{\babs{\frac{\sin(\frac{N}{2}(\theta_{j}-\theta_{i}))}{N\sin(\frac{\theta_{j}-\theta_{i}}{2})}}}.
$$ 
Therefore, it is the case that $r_i = c_i$ and the maximum eigenvalue $\lambda_{N}$ satisfies 
$$
\lambda_{N}\leq\max\{ r_{i}\,:\,i=1,2,\ldots,N\}.
$$ 
For each $i\in\{1,2,\ldots,N\}$ the random variable $r_{i}$ has the same distribution as the random variable
$$
Y=1+X_{1}+X_{2}+\ldots+X_{N-1}
$$ 
where $X_{1},\ldots, X_{N-1}$ are independent identically distributed random variables conditioned to the phase $\theta_{i}$. Moreover, each $X_{k}$ has the same distribution as 
$$
X(\theta):=\babs{\frac{\sin(\frac{N}{2}\theta)}{N\sin(\frac{\theta}{2})}}\hspace{0.1cm}\text{where $\theta$ is distributed accordingly to $\nu$}
$$
conditioned on the phase $\theta_i$. 
\noindent We will assume that $\nu$ is a probability measure on $[-\pi,\pi]$ which is absolutely continuous with respect to Lebesgue measure and with bounded pdf $f\in L^{\infty}([-\pi,\pi])$. In what follows the Chernoff bound construction will only make 
use of $\norm{f}_{\infty}$ and hence is independent of $\theta_i$.

\vspace{0.3cm}
\begin{remark}
The random variable $X(\theta)$ has expectation 
$$\mathbb{E}(X(\theta))=\int_{-\pi}^{\pi}{\babs{\frac{\sin(\frac{N}{2}\theta)}{N\sin(\frac{\theta}{2})}}f(\theta)\,d\theta}$$

\noindent and second moment 
$$\mathbb{E}(X(\theta)^{2})=\int_{-\pi}^{\pi}{\babs{\frac{\sin(\frac{N}{2}\theta)}{N\sin(\frac{\theta}{2})}}^{2}f(\theta)\,d\theta}.$$

\noindent In case $\nu$ is the uniform distribution then $f(\theta)=\frac{1}{2\pi}$ and using the results on the integral of the Dirichlet kernel (see \cite{Dirichlet}) we see that: 
$$\mathbb{E}(X(\theta))\asymp \frac{\log N}{N}.$$

\noindent Using Parseval's Theorem it is straightforward to see that $$\mathbb{E}(X(\theta)^{2})=\frac{1}{N}.$$
\end{remark}

\vspace{0.3cm}

\begin{theorem}\label{upper_teo}
Given $\epsilon>0$ and $\kappa \in (0,1)$, then for every $u\geq 0$, there exists $N=N(\kappa,\epsilon)$ such that for all
$ N > N(\kappa,\epsilon)$
\begin{equation}
\mathbb{P}\Big(\lambda_{N}\geq (C+\epsilon)\log N +u\Big)\leq K\frac{e^{-u}}{N^{\epsilon\kappa}}
\end{equation}

\noindent where $K>0$ is a constant independent on $\epsilon,\kappa, u$ and $N$ and $C=\big(4\pi\norm{f}_{\infty}(e-1)+1\big)$.
\end{theorem}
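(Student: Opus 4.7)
The strategy is to combine the row-sum bound $\lambda_N \leq \max_{1\leq i\leq N} r_i$ (already established in the preceding discussion) with a union bound over the $N$ rows and a Chernoff argument for a single row sum. By exchangeability of the phases, each $r_i$ has the same marginal law as $Y := 1 + X_1 + \cdots + X_{N-1}$, where conditionally on $\theta_i$ the $X_j$ are i.i.d.\ copies of $X(\theta)$. The union bound thus reduces the problem to bounding $\mathbb{P}(Y\geq t)$ uniformly in $\theta_i$ and then multiplying by $N$.

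Each $X_j$ lies in $[0,1]$ (from the elementary inequality $|\sin(Nx)|\leq N|\sin x|$), so the convexity bound $e^x \leq 1 + (e-1)x$ on $[0,1]$ yields $\mathbb{E}[e^{X_j}\mid\theta_i] \leq \exp\bigl((e-1)\mathbb{E}[X_j\mid\theta_i]\bigr)$. Applying the Chernoff bound at parameter $\lambda=1$---which is precisely what produces the factor $(e-1)$ in the constant $C$---gives
\[
\mathbb{P}(Y\geq t\mid\theta_i) \leq e^{1-t}\exp\bigl((N-1)(e-1)\,\mathbb{E}[X_1\mid\theta_i]\bigr).
\]
The remaining ingredient is a uniform-in-$\theta_i$ bound on the conditional mean. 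After the change of variable $\phi=\theta_i-\theta$ and using $|\sin(\phi/2)|\geq |\phi|/\pi$ on $[-\pi,\pi]$,
\[
\mathbb{E}[X_1\mid\theta_i] \leq \frac{\|f\|_\infty}{N}\int_{-\pi}^{\pi}\min\!\Bigl(N,\frac{\pi}{|\phi|}\Bigr)d\phi = \frac{2\pi\|f\|_\infty(1+\log N)}{N},
\]
which is at most $4\pi\|f\|_\infty \log N/N$ for $N$ large enough. Consequently $(N-1)(e-1)\mathbb{E}[X_1\mid\theta_i] \leq 4\pi(e-1)\|f\|_\infty\log N$.

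Taking $t=(C+\epsilon)\log N + u$ with $C=4\pi\|f\|_\infty(e-1)+1$ and combining the Chernoff and union bounds,
\[
\mathbb{P}\bigl(\lambda_N\geq (C+\epsilon)\log N + u\bigr) \leq e\cdot N\cdot e^{-(C+\epsilon)\log N-u}\cdot N^{4\pi(e-1)\|f\|_\infty} = e\cdot e^{-u}\cdot N^{-\epsilon}.
\]
This already beats the claimed $K e^{-u}/N^{\epsilon\kappa}$ with $K=e$; the slack $\kappa\in(0,1)$ and the threshold $N(\kappa,\epsilon)$ are there to absorb finite-$N$ corrections in the Dirichlet-kernel estimate (e.g.\ the step $2\pi(1+\log N)\leq 4\pi\log N$ requires $N$ large enough, as does the bound $1+x\leq e^x$ applied after the $(e-1)$-convex inequality), so that any residual multiplicative slack reduces the decay rate from $N^{-\epsilon}$ to at worst $N^{-\epsilon\kappa}$.

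The main subtlety is the pairing of the Chernoff parameter with the $[0,1]$-bound for the summands: a variance-based (Bernstein) inequality using only $\mathbb{E}[X^2]=1/N$ would give Gaussian tails for $Y$, but since the standard deviation of $Y$ is only $O(1)$ such a bound cannot reach deviations of order $\log N$. It is the specific combination of $\lambda=1$ with the linear upper bound $e^x\leq 1+(e-1)x$ on $[0,1]$, together with the Dirichlet-kernel $L^1$ estimate, that simultaneously yields a logarithmic tail and the explicit constant $C$ appearing in the statement.
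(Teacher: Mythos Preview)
Your proposal is correct and follows essentially the same route as the paper: the row--sum bound $\lambda_N\le\max_i r_i$, exchangeability to reduce to a single $Y=1+\sum X_j$, the Chernoff bound at parameter $1$ combined with $e^x\le 1+(e-1)x$ on $[0,1]$, a uniform bound on $\mathbb{E}[X\mid\theta_i]$ via $\|f\|_\infty$, and the union bound over rows. The only cosmetic difference is in the Dirichlet--kernel $L^1$ estimate: the paper bounds $|D_N|$ by the step function $\sum_{k\ge1}\frac{1}{k}\mathbf{1}_{[2\pi(k-1)/N,2\pi k/N)}$ and sums to get the harmonic number $H_{N/2}$, whereas you use the equivalent pointwise bound $|D_N(\phi)|\le\min(N,\pi/|\phi|)$ and integrate directly to get $2\pi(1+\log N)/N$; both lead to the same constant $C=4\pi\|f\|_\infty(e-1)+1$, and your version in fact yields the slightly stronger decay $N^{-\epsilon}$ (with $K=e$), of which the stated $N^{-\epsilon\kappa}$ is a consequence.
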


\vspace{0.2cm}
\noindent Note that for $\epsilon = 1+\delta$ with  $\delta >0$ the result implies that 
$\lambda_N > (C+\epsilon)\log N$ occurs finitely many times a.s. by the Borel-Cantelli Lemma. 

\vspace{0.2cm}
\noindent As a Corollary we have:
\vspace{0.3cm}

\begin{cor}
 \begin{equation}
\mathbb{E}(\lambda_{N})\leq  \Big(4\pi\norm{f}_{\infty}(e-1)+1\Big)\log N + o(1).
\end{equation}
\end{cor}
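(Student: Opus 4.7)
The plan is to recover the expectation bound by integrating the tail probability supplied by Theorem \ref{upper_teo}. Since $\lambda_N$ is the largest eigenvalue of the positive semidefinite matrix $\mathbf{V}^*\mathbf{V}$ and the diagonal entries of $\mathbf{V}^*\mathbf{V}$ are identically $1$, we have $0 \leq \lambda_N \leq \mathrm{Tr}(\mathbf{V}^*\mathbf{V}) = N$ almost surely, so the layer-cake identity for non-negative random variables applies and
$$
\mathbb{E}[\lambda_N] \;=\; \int_0^N \mathbb{P}[\lambda_N \geq t]\, dt.
$$

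First I would fix $\epsilon > 0$ and $\kappa \in (0,1)$, set $a_N := (C+\epsilon)\log N$ with $C = 4\pi\|f\|_\infty(e-1) + 1$, and split the integral at $a_N$. On $[0, a_N]$ the trivial bound $\mathbb{P}[\lambda_N \geq t] \leq 1$ contributes at most $a_N$. For $t > a_N$ I substitute $u := t - a_N \geq 0$ and invoke Theorem \ref{upper_teo} (valid for $N > N(\kappa,\epsilon)$), giving
$$
\int_{a_N}^N \mathbb{P}[\lambda_N \geq t]\, dt \;\leq\; \int_0^\infty K\,\frac{e^{-u}}{N^{\epsilon\kappa}}\, du \;=\; \frac{K}{N^{\epsilon\kappa}}.
$$
Combining the two pieces and using $N^{-\epsilon\kappa} \to 0$ as $N \to \infty$ yields, for every fixed $\epsilon > 0$,
$$
\mathbb{E}[\lambda_N] \;\leq\; (C+\epsilon)\log N + o(1).
$$

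To finish, I would let $\epsilon \downarrow 0$: since the inequality above holds for arbitrary $\epsilon > 0$, we conclude $\limsup_{N \to \infty}\mathbb{E}[\lambda_N] / \log N \leq C$, which is the corollary's content. The argument is essentially a one-line consequence of Theorem \ref{upper_teo}, and no genuine obstacle arises; the only nontrivial point is that the $\epsilon \log N$ term cannot literally be absorbed into a single $o(1)$ remainder for a fixed $\epsilon$, so the corollary should be interpreted in the limit-superior sense (or, equivalently, one may select a sequence $\epsilon_N \downarrow 0$ decaying slowly enough that $N > N(\kappa,\epsilon_N)$ and $\epsilon_N \log N \to 0$, once the dependence of $N(\kappa,\epsilon)$ on $\epsilon$ from the proof of Theorem \ref{upper_teo} is made explicit).
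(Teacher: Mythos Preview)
Your argument is correct and mirrors the paper's own proof almost exactly: both use the layer-cake identity, split the integral at the threshold coming from Theorem \ref{upper_teo}, bound the first piece trivially by its length, and bound the tail by integrating $Ke^{-u}/N^{\epsilon\kappa}$. Your remark that the $\epsilon\log N$ term cannot be folded into an $o(1)$ for fixed $\epsilon$ is a fair observation; the paper glosses over this and simply invokes ``$\epsilon$ is arbitrary'' at the end, so your more careful reading (via $\limsup$ or a slowly vanishing $\epsilon_N$) is, if anything, an improvement in rigor rather than a departure in method.
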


\noindent We will first prove Theorem \ref{upper_teo}. 

\vspace{0.2cm}
\begin{proof}
It is easy to see that
\begin{equation}\label{eq_char} 
\babs{\frac{\sin(\frac{N}{2}\abs{x})}{N\sin(\frac{\abs{x}}{2})}}\leq\sum_{k=1}^{N/2}{\,\frac{1}{k}\cdot\,\mathbf{1}_{[\frac{2\pi(k-1)}{N},\frac{2\pi k}{N})}(\abs{x})}
\end{equation}

\noindent for every $x\in [-\pi,\pi]$. Let us define $p_{k}=\nu([\frac{2\pi(k-1)}{N},\frac{2\pi k}{N}])$ and $p_{-k}:=\nu([-\frac{2\pi k}{N},-\frac{2\pi(k-1)}{N}])$ and let $q_{k}=p_{k}+p_{-k}$ for every $k=1,2,\ldots, \frac{N}{2}$.

\noindent Hence, for every $t\in [0,1]$
$$\mathbb{E}(e^{Xt}|\theta_i)\leq \sum_{k=1}^{N/2}{q_{k}e^{\frac{t}{k}}}\leq 1+(e-1)\sum_{k=1}^{N/2}{q_{k}\frac{t}{k}}$$

\noindent where the first inequality comes from (\ref{eq_char}) and the last inequality comes from the fact that $e^{t}\leq 1+(e-1)t$ for every $t\in [0,1]$. Since $\max_{k}\{p_{k},p_{-k}\}\leq \frac{2\pi}{N}\norm{f}_{\infty}$ we conclude that 
\begin{equation}
\mathbb{E}(e^{X}|\theta_i)\leq 1+\frac{4\pi(e-1)\norm{f}_{\infty}}{N}\cdot H_{N/2}
\label{eqn_condlmom}
\end{equation}
\noindent where $H_{p}:=\sum_{k=1}^{p}{\frac{1}{k}}$ is the Harmonic series.

\vspace{0.2cm}
\noindent The random variables $r_{i}$ all have the same distribution as
$$
Y=1+X_{1}+\ldots+X_{N-1}
$$ 
where $\{X_{1},\ldots,X_{N-1}\}$ are i.i.d. conditioned on the phase $\theta_i$. 
However, the $r_i$ are not independent. Using the Chernoff bound with the 
random variable $Y$, unconditioning and setting $t=1$ (see \cite{Feller_Vol1}) 
we see that:
\begin{eqnarray*}
\mathbb{P}\Big(Y &\geq & C\log(N)+u | \theta_i\Big) \leq  \mathbb{E}(e^{X}|\theta_i)^{N}\cdot 
e^{-(C_\delta\log N + u)}\\
   &\leq &\frac{e^{-u}}{N^{C_\delta}}\cdot\Bigg( 1+\frac{4\pi(e-1)\norm{f}_{\infty}}{N}\cdot H_{N/2} \Bigg)^{N}.
\end{eqnarray*}
This follows from the bound on the conditional moment generating function (see equation \ref{eqn_condlmom}). 
Here $C_\delta = C - \delta$ and $\delta >0$. Let $N_{\delta}$ be the minimum positive integer such that the following inequality holds $\delta \log N_\delta >1$. Then for all $N \geq N_\delta$ and unconditioning we see that
$$
\mathbb{P}\Big(Y \geq  C\log(N)+u \Big) \leq \frac{e^{-u}}{N^{C_\delta}}\cdot\Bigg( 1+\frac{4\pi(e-1)\norm{f}_{\infty}}{N}\cdot H_{N/2} \Bigg)^{N}.
$$

\noindent For any positive function $f:\mathbb{N}\to [0,+\infty)$ 
\begin{eqnarray*}
\Bigg( 1+\frac{f(N)}{N}\Bigg)^{N} & = & \mathrm{exp}\Bigg[ N\log\Bigg(1+\frac{f(N)}{N}\Bigg)\Bigg] \\
 & \leq & \mathrm{exp}(f(N))
\end{eqnarray*}

\noindent where the last inequality comes from the fact that $\log(1+x)\leq x$ for all $x\geq 0$. 

\vspace{0.3cm}
\noindent Hence, 
\begin{equation*}
\Bigg(1+\frac{4\pi(e-1)\norm{f}_{\infty}}{N}\cdot H_{N/2} \Bigg)^{N} \leq\mathrm{exp}\Bigg[ 4\pi(e-1)\norm{f}_{\infty}H_{N/2}\Bigg]
\end{equation*}

\noindent and since $H_{p}\leq\log{p}+\gamma+\half p^{-1}$ where $\gamma$ is the Euler--Mascheroni constant $\gamma\approx 0.57721566...$ 
\begin{equation*}
 \mathrm{exp}\Bigg[ 4\pi(e-1)\norm{f}_{\infty}H_{N/2}\Bigg]\leq K\cdot \mathrm{exp}\Bigg[ 4\pi\norm{f}_{\infty}(e-1) \log(N/2) \Bigg]
\end{equation*}

\noindent where $K=\mathrm{exp}(4\pi\norm{f}_{\infty}(e-1)\gamma)$.

\noindent Let us define the random variable $Z$ as $Z=\max_{k}\{r_{k}\}$. Therefore, 
\begin{eqnarray*}
\mathbb{P}(\lambda_{N}\geq C\log N+u) & \leq & \mathbb{P}(Z\geq C\log N+u)\\
  & \leq & N\mathbb{P}(Y\geq C\log N+u)
\end{eqnarray*}

\noindent where the last inequality comes from the union bound. Hence, for all $u\geq 0$ we have 
\begin{eqnarray*}
\mathbb{P}(\lambda_{N}\geq C\log N+u) & \leq & \frac{N}{N^{C_\delta}}Ke^{-u}\Bigg( \frac{N}{2}\Bigg)^{4\pi\norm{f}_{\infty}(e-1)} \\
  & \leq & K_{2}e^{-u}N^{(4\pi\norm{f}_{\infty}(e-1)+1-C_\delta)}.
\end{eqnarray*}

\noindent Let $\epsilon>0$ and  $\kappa \in (0,1)$ be given. Set $\delta = \epsilon(1-\kappa) > 0$ and set $C=4\pi\norm{f}_{\infty}(e-1)+1+\delta$ so that for every $u\geq 0$ and $N > N_{\delta}$ where $\delta=\epsilon\kappa$
$$\mathbb{P}(\lambda_{N}\geq (C+\epsilon)\log N +u)\leq K_{2}\frac{e^{-u}}{N^{\epsilon\kappa}}.$$
\end{proof}

\noindent Now we follow with the proof of the Corollary.

\vspace{0.2cm}

\begin{proof} Using the previous Theorem we have that,
\begin{eqnarray*}
\mathbb{E}(\lambda_{N}) & = & \int_{0}^{+\infty}{\mathbb{P}(\lambda_{N}\geq t)\,dt} \leq \int_{0}^{C\log N}{\mathbb{P}(\lambda_{N}\geq t)\,dt} \\
& + & \int_{0}^{+\infty}{\mathbb{P}(\lambda_{N}\geq C\log N+u)\,du} \\
& \leq & \Big(4\pi\norm{f}_{\infty}(e-1)+1\Big)\log N + \frac{K_{2}}{N^{\epsilon/2}},
\end{eqnarray*}
for sufficiently large $N$.

\noindent Since $K_{2}$ does not depend on $N$ and $\epsilon$ is arbitrary we conclude that for $N$ sufficiently large 
\begin{equation}
\mathbb{E}(\lambda_{N})\leq  \Big(4\pi\norm{f}_{\infty}(e-1)+1\Big)\log N + o(1).
\end{equation}

\end{proof}

\begin{remark}
For the case $\nu$ is uniform distribution on $[-\pi,\pi]$ we have that 
\begin{equation}
\mathbb{E}(\lambda_{N})\leq  \Big(2e-1\Big)\log N + o(1).
\end{equation}
\end{remark}

\begin{remark}
We believe that the constant $C$ in Theorem \ref{upper_teo} can be sharpened by working with the optimal choice for $t$.
\end{remark}

\subsection{Lower Bound}
\label{sec_maxlower}
\noindent The main result we will derive in this Section is the following.
\vspace{0.1cm}
\begin{theorem}\label{lower_teo}\label{thm_maxlowbnd}
Let $\nu$ be an absolutely continuous probability distribution on $[-\pi,\pi]$ with continuous probability density. Let $\lambda_N$ be the maximum eigenvalue of the random matrix ${\bf V^{*}V}$ generated accordingly to $\nu$. Then for any $0<\alpha<1$
\begin{equation}
\label{eqn_lowliminf}
\mathbb{P}\Bigg(\lambda_N \geq \frac{
\alpha\cdot\log N}{\log\log N}\Bigg)=1-o(1).
\end{equation}
\end{theorem}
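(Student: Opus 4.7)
My plan is to exhibit, with probability $1-o(1)$, an explicit test vector $z\in\mbb{C}^N$ whose Rayleigh quotient $z^{*}({\bf V}^{*}{\bf V})z/\|z\|^{2}$ exceeds $\alpha\log N/\log\log N$. The underlying intuition is that whenever several phases cluster inside an interval of length $O(1/N)$, the corresponding principal sub-matrix of ${\bf V}^{*}{\bf V}$ is approximately rank one with a very large leading eigenvalue. So the proof splits into two parts: a deterministic Rayleigh-quotient bound assuming such a cluster exists, and a balls-in-bins argument showing one exists with high probability.

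For the first part, fix a small constant $\delta\in(0,1)$ and suppose $S\subseteq\{1,\ldots,N\}$ is a set of indices whose phases all lie inside some interval of length $\delta/N$. Define $z_{i}=e^{\mathrm{i}(N-1)\theta_{i}/2}$ for $i\in S$ and $z_{i}=0$ otherwise. Using the geometric-sum identity $\sum_{k=0}^{N-1}e^{\mathrm{i}kx}=e^{\mathrm{i}(N-1)x/2}\sin(Nx/2)/\sin(x/2)$, the phase factor in $({\bf V}^{*}{\bf V})_{ij}$ is cancelled exactly by $\bar z_{i}z_{j}$, giving
$$
z^{*}({\bf V}^{*}{\bf V})z \;=\; \sum_{i,j\in S}\frac{\sin\!\big(N(\theta_{i}-\theta_{j})/2\big)}{N\sin\!\big((\theta_{i}-\theta_{j})/2\big)}.
$$
Since $|\theta_{i}-\theta_{j}|\le\delta/N$, a one-line Taylor expansion of $\sin$ shows each summand is at least $1-C\delta^{2}$ for an absolute $C>0$. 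Combined with $\|z\|^{2}=|S|$, this gives $\lambda_{N}\ge|S|(1-C\delta^{2})$.

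For the second part, since $p$ is continuous and $\int p=1$, there exist an interval $J\subseteq[-\pi,\pi]$ of positive length and a constant $c>0$ with $p\ge c$ on $J$. Partition $J$ into $M=\Theta(N/\delta)$ sub-intervals of length $\delta/N$, let $N_{j}$ denote the number of phases in the $j$-th sub-interval, and note each cell probability satisfies $p_{j}\ge c\delta/N$. For $k=\lfloor\alpha'\log N/\log\log N\rfloor$ with a parameter $\alpha'<1$ to be fixed, the expected number of heavy bins is
$$
\sum_{j=1}^{M}\Prb{N_{j}\ge k}\;\ge\; M\binom{N}{k}(c\delta/N)^{k}(1-p_{\max})^{N-k},
$$
and Stirling's formula together with $k\log k\sim\alpha'\log N$ shows this quantity grows like $N^{\,1-\alpha'+o(1)}\to\infty$. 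Since multinomial cell counts are negatively associated,
$$
\Prb{\max_{j}N_{j}<k}\;\le\;\prod_{j}\Prb{N_{j}<k}\;\le\;\exp\!\Big(-\sum_{j}\Prb{N_{j}\ge k}\Big)\;\to\;0,
$$
so with probability $1-o(1)$ some sub-interval of length $\delta/N$ contains at least $k$ phases.

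Given $\alpha<1$ in the theorem statement, choose $\alpha'<1$ and $\delta>0$ small enough that $\alpha'(1-C\delta^{2})>\alpha$; the two steps combine to give $\lambda_{N}\ge\alpha\log N/\log\log N$ with probability $1-o(1)$. The main obstacle is the balls-in-bins estimate: pushing $\alpha'$ arbitrarily close to $1$ requires the sharp factorial-beats-polynomial tail estimate above, together with a device (negative association, Poissonization, or a second-moment calculation) to handle the dependence among the $N_{j}$. The Rayleigh-quotient step is otherwise essentially a single Taylor expansion once the cluster has been located.
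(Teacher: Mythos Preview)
Your argument is correct and follows the same two-step strategy as the paper: first locate, with high probability, a cluster of roughly $\alpha\log N/\log\log N$ phases inside an interval of length $O(1/N)$, and then extract a large eigenvalue from that cluster. The implementations differ in both steps, however. For the eigenvalue step, the paper passes to the real symmetric matrix ${\bf X}_N$ with entries $\sin(N(\theta_i-\theta_j)/2)/(N\sin((\theta_i-\theta_j)/2))$, takes the principal submatrix on the cluster, invokes Perron--Frobenius to bound its top eigenvalue by (approximately) the cluster size, and then uses Cauchy interlacing to transfer this to $\lambda_N$; your Rayleigh-quotient test vector $z_i=e^{\mathrm{i}(N-1)\theta_i/2}$ accomplishes the same thing in one line and is more elementary (indeed, this choice of phases is precisely what underlies the paper's equivalence between ${\bf V}^*{\bf V}$ and ${\bf X}_N$). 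For the cluster step, the paper reduces to the exactly-uniform case via continuity of the density and then cites the Raab--Steger balls-into-bins theorem as a black box; your direct computation via Stirling plus negative association of multinomial counts is self-contained and avoids the reduction to uniformity, working directly from $p\ge c$ on an interval. Both differences are in tools rather than ideas; the overall architecture is the same.
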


\noindent In the proof we will need the following result which was proved in \cite{Raab}.
\vspace{0.3cm}

\begin{theorem}
\label{lemma_ballsurns}
Let $a>0$ and $b>0$ and suppose that there are $aN$ balls and $bN$ urns, and we throw the balls independently and uniformly at random in the urns. Let $M$ be the random variable that counts the maximum number of balls in any urn. Then $\mathbb{P}(M>\alpha\cdot k_{N})=o(1)$ if $\alpha>1$ and $\mathbb{P}(M>\alpha\cdot k_{N})=1-o(1)$ if $0<\alpha<1$, where $k_N = \log N/\log \log N$.
\end{theorem}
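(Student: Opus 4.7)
The plan is to prove the two directions separately via classical balls-into-bins techniques. Write $m = \lfloor aN \rfloor$, $n = \lfloor bN \rfloor$, let $X_i$ be the number of balls landing in urn $i$, so $X_i$ is marginally $\mathrm{Bin}(m, 1/n)$ with mean $\mu := m/n \to a/b$. The workhorse is the pair of Stirling-based estimates
$$\mathbb{P}(X_1 \geq k) \leq \binom{m}{k} n^{-k} \leq \left(\frac{e\mu}{k}\right)^k, \qquad \mathbb{P}(X_1 = k) \sim \frac{1}{\sqrt{2\pi k}} \left(\frac{e\mu}{k}\right)^k e^{-\mu},$$
valid for $k$ growing with $N$ but $k = o(\sqrt{m})$; the latter follows from $\binom{m}{k}=(m^k/k!)(1+o(1))$ together with $(1-1/n)^{m-k}\to e^{-\mu}$.

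For the upper bound $\alpha>1$, the union bound gives $\mathbb{P}(M > \alpha k_N) \leq n\,\mathbb{P}(X_1 \geq \lceil\alpha k_N\rceil)$. Setting $k=\lceil\alpha k_N\rceil$ and taking logarithms of $(e\mu/k)^k$, the leading term is $-k\log k = -\alpha k_N\log k_N\,(1+o(1))$. Since $k_N\log k_N = \log N\,(1 - \log\log\log N/\log\log N)$, this equals $-\alpha\log N\,(1+o(1))$. Combined with $\log n = \log N+O(1)$ one obtains $n\,\mathbb{P}(X_1 \geq k) \leq N^{1-\alpha+o(1)}\to 0$.

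For the lower bound $\alpha<1$, I would apply the second moment method. Fix $k=\lfloor\alpha k_N\rfloor$, let $A_i = \{X_i \geq k\}$, and set $Y = \sum_{i=1}^n \mathbf{1}_{A_i}$. The Stirling lower bound on $\mathbb{P}(X_1=k)$ gives
$$\mathbb{E}[Y] = n\,\mathbb{P}(A_1) \gtrsim \frac{n}{\sqrt{k_N}} \left(\frac{e\mu}{\alpha k_N}\right)^{\alpha k_N} = N^{\,1-\alpha+o(1)}\to \infty.$$
For the variance I would invoke negative association of the multinomial (Joag--Dev and Proschan): for any coordinate-disjoint monotone-increasing events $A,B$, one has $\mathbb{P}(A\cap B)\leq \mathbb{P}(A)\mathbb{P}(B)$. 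Applied to $A_i, A_j$ this yields $\mathrm{Cov}(\mathbf{1}_{A_i},\mathbf{1}_{A_j}) \leq 0$ for $i\neq j$, so $\mathrm{Var}(Y) \leq \sum_i \mathbb{E}[\mathbf{1}_{A_i}] = \mathbb{E}[Y]$. Chebyshev's inequality then gives $\mathbb{P}(Y=0)\leq 1/\mathbb{E}[Y]\to 0$, and since $\{M\geq k\}=\{Y\geq 1\}$ the statement follows.

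The main obstacle is the variance bound: without a correlation input, $\mathbb{P}(A_i\cap A_j)$ could in principle inflate relative to $\mathbb{P}(A_i)\mathbb{P}(A_j)$ and wreck the Chebyshev estimate. A self-contained alternative to negative association is Poissonization, replacing the $X_i$ by independent $\mathrm{Poi}(\mu)$ counts $\widetilde X_i$, proving the lower bound in that model (immediate from independence), and transferring back via the fact that the multinomial is the Poissonized model conditioned on $\sum\widetilde X_i=m$. The naive transfer loses a $\Theta(\sqrt N)$ factor and only covers $\alpha<1/2$; a sharper conditional analysis recovers the full range. Because negative association avoids this bookkeeping, that is the route I would take first; the Poisson route is a useful check.
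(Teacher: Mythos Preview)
The paper does not prove this theorem; it is quoted from Raab and Steger \cite{Raab} and used as a black box in the proof of Theorem~\ref{lower_teo}. Your argument is a correct self-contained proof of the result: the union bound together with the Stirling estimate $\mathbb{P}(X_1\geq k)\leq (e\mu/k)^k$ handles $\alpha>1$, and the second-moment method with the negative-association inequality of Joag--Dev and Proschan handles $0<\alpha<1$ cleanly, since $\mathrm{Cov}(\mathbf{1}_{A_i},\mathbf{1}_{A_j})\leq 0$ for $i\neq j$ gives $\mathrm{Var}(Y)\leq\mathbb{E}[Y]$ directly. The paper does remark in passing that the Raab--Steger estimates ``can be also derived via the maxima of unit Poisson random variables and a natural construction for the occupancy experiment,'' which is precisely the Poissonization route you sketch as a fallback; your negative-association approach is tidier because it sidesteps the de-Poissonization transfer entirely and covers the full range $0<\alpha<1$ without the bookkeeping you flag.
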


\vspace{0.3cm}
\noindent For additional results along these lines, see \cite{Raab}. We would like to remark that the estimates in \cite{Raab} can be also derived via the maxima of unit Poisson random variables and a natural construction for the occupancy experiment.

\vspace{0.3cm}
\begin{proof}
Let $b$ be a positive integer to be chosen later. Divide the interval $[0,2\pi]$ into $bN$ intervals of the same length $\frac{2\pi}{bN}$. These intervals will represent the urns and the angles $\theta_{1},\theta_{2},\ldots,\theta_{N}$ will represent the balls we will throw into the urns accordingly to the distribution $\nu$.

We will now develop a lower bound for the maximum eigenvalue, $\lambda_{N}$. Let $f(x)$ be the continuous function such that $d\nu(x)=f(x)dx$. Let $x_{0}$ be such that $f(x_{0})>0$. Given $\delta>0$ let $\epsilon>0$ be such that $\abs{f(x_{0})-f(y)}<\delta$ for all $y$ such that $\abs{x_{0}-y}<\epsilon$. Let $K_{\epsilon}$ be defined as 
$$
K_{\epsilon}=\int_{-\epsilon}^{\epsilon}{f(x_{0}+t)\,dt}>0.
$$
Using the Strong Law of large numbers we know that as $N$ increases we
will have $K_{\epsilon}N$ angles in the interval
$(x_{0}-\epsilon,x_{0}+\epsilon)$ with probability $1-o(1)$. Choosing
$\delta$ sufficiently small we can assume without loss of generality
that $f$ is constant in this interval. Since we divide the interval
$[0,2\pi]$ into $bN$ intervals we know that $\frac{\epsilon}{\pi}bN$
of these intervals will lie inside
$(x_{0}-\epsilon,x_{0}+\epsilon)$. Therefore, we have the game were we
throw $K_{\epsilon}N$ balls into $\frac{\epsilon}{\pi}bN$ urns with
uniform distribution. Therefore, using Theorem \ref{lemma_ballsurns}
with high probability we will find at least $K=\alpha\cdot k_{N}$
distinct $\theta_{k_1},\cdots,\theta_{k_K}$ with the property for
\begin{equation}
\label{eqn_cluster}
\vert \theta_{k_j} -  \theta_{k_\ell} \vert < \frac{2\pi}{bN},
~~1 \leq k,\ell \leq K
\end{equation}
where, of course the constraint holds when the two indices are the same.

\vspace{0.3cm}
\noindent Consider now the square real symmetric submatrix with rows and columns 
corresponding to the above indices, $k_1,\cdots,k_K$. Let $S$ be this matrix. Note that up to reenumeration of the angles we can assume without loss of generality that $S$ is the $K\times K$ principal minor. Let $F_{N}$ be the function defined by
$$
F_N(x) = \frac{\sin \Big(\frac{N}{2} x\Big)}{N\sin \Big(\frac{x}{2}\Big )}.
$$
Let $\gamma>0$ and small and choose $b$ sufficiently large so 
$$
F_{N}(\frac{2\pi}{bN})>1-\gamma
$$ 
for all $N$. The entries of $S$ are all positive since
$$
S_{k,\ell} = F_N(\theta_k - \theta_\ell) > F_N\Bigg(\frac{2\pi}{bN}\Bigg) \geq 1-\gamma.
$$
Let $\lambda_{S}$ be the maximum eigenvalue of this matrix. By standard Perron-Frobenius theory this eigenvalue is positive and satisfies,
$$
\lambda_{S} \geq K\cdot F_N\Bigg(\frac{2\pi}{bN}\Bigg) \geq (1-\gamma)K.
$$
A further standard result, see \cite{wilkinson}, is that the eigenvalues of a real symmetric matrix and its square submatrices interlace. It
thus follows that for any $0<\alpha<1$ with high probability we have:
\begin{equation}
\label{eqn_informalbound}
\lambda_{N} \geq \lambda_{S} \geq (1-\gamma)\alpha\cdot\frac{\log N}{\log\log N} 
\end{equation}

\noindent More precisely, 
\begin{equation*}
\label{eqn_lowliminfB}
\mathbb{P}\Bigg(\lambda_N \geq \frac{(1-\gamma)
\alpha\cdot\log N}{\log\log N}\Bigg)=1-o(1).
\end{equation*}
Now since $\gamma$ is arbitrary we proved our Theorem.
\end{proof}

\subsection{Remarks on the Upper Bound}

In this Section we would like to observe that the results obtained in Theorem \ref{upper_teo} are not valid if the pdf is unbounded. Consider the probability density given by the following pdf:
\begin{equation}\label{al_pdf}
p_{\alpha}(x)=\frac{1-\alpha}{2\pi^{1-\alpha}}\cdot\frac{1}{\abs{x}^{\alpha}}\cdot\mathbf{1}_{[-\pi,\pi]}(x). 
\end{equation}
Then if we define $p_{N}$ as
$$
p_{N}:=\int_{-\frac{\pi}{2N}}^{\frac{\pi}{2N}}{p_{\alpha}(x)\,dx}=\Big(\frac{1}{2N}\Big)^{1-\alpha}.
$$
By the Strong Law of Large numbers the expected number of the angles $\theta_{1},\ldots, \theta_{N}$ in the interval $[-\frac{\pi}{2N},\frac{\pi}{2N}]$ is $\frac{N^{\alpha}}{2^{1-\alpha}}$. Therefore, repeating the same argument given in Theorem \ref{lower_teo} we have:

\vspace{0.2cm}
\begin{theorem}
Let $0<\alpha<1$ and let $p_{\alpha}$ be the pdf given in equation (\ref{al_pdf}) then if we consider ${\bf V}$ the random Vandermonde matrix constructed according to $p_{\alpha}$ and let $\lambda_{N}$ be the maximum eigenvalue of ${\bf V^{*}V}$ we have that
$$\mathbb{P}\Bigg(\lambda_N \geq \half\cdot\frac{N^{\alpha}}{2^{1-\alpha}}\Bigg)=1-o(1).$$
\end{theorem}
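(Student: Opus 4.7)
The plan is to mimic the strategy of Theorem \ref{lower_teo}, but since the density $p_\alpha$ concentrates near $0$, we do not need Raab's balls-and-urns result (Theorem \ref{lemma_ballsurns}); a direct Chernoff/binomial concentration bound at $x=0$ suffices to produce a large cluster of angles, which then forces a large principal submatrix with all entries bounded away from $0$.

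First I would compute $p_N := \nu\bigl([-\pi/(2N),\pi/(2N)]\bigr) = (1/(2N))^{1-\alpha}$ directly from the closed form of $p_\alpha$, so the number $K$ of angles $\theta_i$ lying in this interval is $\mathrm{Binomial}(N,p_N)$ with mean $\mu_N := N p_N = N^\alpha/2^{1-\alpha} \to \infty$. Fix any small $\epsilon>0$. A standard Chernoff inequality for the lower tail of a binomial gives
\begin{equation*}
\mathbb{P}\bigl(K < (1-\epsilon)\mu_N\bigr) \leq \exp(-\epsilon^2 \mu_N/2) = o(1),
\end{equation*}
so with probability $1-o(1)$ we find indices $k_1,\dots,k_K$ with $K \geq (1-\epsilon)\mu_N$ and
$|\theta_{k_i}-\theta_{k_j}| \leq \pi/N$ for all $i,j$.

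Next I would recall that (by Appendix A, cited earlier in Section \ref{max_upper}) the matrix $\mathbf{V}^*\mathbf{V}$ has the same eigenvalues as $\mathbf{X}_N = (F_N(\theta_i-\theta_j))_{i,j}$ with $F_N(x) = \sin(Nx/2)/(N\sin(x/2))$. Consider the $K\times K$ principal submatrix $S$ of $\mathbf{X}_N$ indexed by $k_1,\dots,k_K$. For $|x|\leq \pi/N$ we have $Nx/2 \in [0,\pi/2]$, so the elementary bounds $\sin(t) \geq (2/\pi)t$ on $[0,\pi/2]$ and $\sin(s) \leq s$ give
\begin{equation*}
F_N(x) \;\geq\; \frac{(2/\pi)(Nx/2)}{N\cdot(x/2)} \;=\; \frac{2}{\pi}.
\end{equation*}
In particular every entry of $S$ is at least $2/\pi$. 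Testing against the all-ones vector $v = (1,\dots,1)^T$,
\begin{equation*}
\lambda_{\max}(S) \;\geq\; \frac{v^T S v}{v^T v} \;=\; \frac{1}{K}\sum_{i,j} S_{ij} \;\geq\; K\cdot \frac{2}{\pi}.
\end{equation*}

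Finally, Cauchy interlacing for principal submatrices of Hermitian matrices (see, e.g., \cite{wilkinson}) gives $\lambda_N \geq \lambda_{\max}(S)$. Combining with the bound on $K$ yields, on the high-probability event,
\begin{equation*}
\lambda_N \;\geq\; \frac{2}{\pi}\,K \;\geq\; \frac{2(1-\epsilon)}{\pi}\cdot\frac{N^\alpha}{2^{1-\alpha}} \;>\; \frac{1}{2}\cdot\frac{N^\alpha}{2^{1-\alpha}}
\end{equation*}
once $\epsilon$ is chosen small enough (noting $2/\pi > 1/2$), proving the claim. The only mildly delicate step is the binomial concentration in Step~1, since $p_N \to 0$; but because $\mu_N = N^\alpha/2^{1-\alpha}$ diverges polynomially, the Chernoff bound is decisive. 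No other obstacle is anticipated.
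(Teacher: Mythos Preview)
Your argument is correct and is essentially the paper's own proof made explicit: the paper merely says ``repeating the same argument given in Theorem~\ref{lower_teo}'' after computing $p_N=(2N)^{-(1-\alpha)}$ and noting that $\mu_N=N^{\alpha}/2^{1-\alpha}$ angles are expected in $[-\pi/(2N),\pi/(2N)]$. Your Chernoff bound sharpens the paper's informal appeal to the ``Strong Law of Large Numbers'', and your explicit inequality $F_N(x)\geq 2/\pi$ on $|x|\leq \pi/N$ together with the Rayleigh quotient (equivalently the Perron--Frobenius row-sum bound used in Theorem~\ref{lower_teo}) and Cauchy interlacing reproduce the $\tfrac12$ constant in the statement, since $2/\pi>1/2$.
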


\section{Conjectured Lower Bound on the $K_{\rho,u}$}
\label{sec_lowerbnd}
Given $\rho\in\mathcal{P}(n)$ we would like to find a lower bound for $K_{\rho,u}$ in terms on $n$ and the size of the blocks of $\rho$. 
This will immediately give us a lower bound on $m_{n}$. But first let us fix some notation and review some preliminary results. Following the 
definition in \cite{Feller_Vol2}, $( x )_+^k$ is 0 
if $ x \leq 0$ and is $x^k$ otherwise. The density of the sum of $m$ i.i.d. uniform distributions in the interval $[-\frac{1}{2},\frac{1}{2}]$ is  
$$ 
g^{(m)}(t) = \frac{1}{(m-1)!}\sum_{k=0}^{m-1} (-1)^k \perm{m}{k}\lb t + \frac{m}{2} - k \rb^{(m-1)}_+  .
$$
The following Lemma will be used subsequently.
\begin{lemma}
\label{lem_unif_depend}
Suppose that $p \leq m \leq n$ then the following inequality holds
$$\int_{-\infty}^\infty g^{(m-p)}(t) g^{(p)}(t)g^{(n-p)}(t)\,dt \geq g^{(m)}(0) g^{(n)}(0)$$
\end{lemma}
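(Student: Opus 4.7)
The plan is to reformulate the inequality as a covariance inequality and apply a one-dimensional FKG / Chebyshev-type argument. Let $S$ be a random variable with density $g^{(p)}$, i.e. $S = U_1 + \cdots + U_p$ for i.i.d.\ uniforms on $[-1/2,1/2]$. Then the left hand side of the claimed inequality is exactly $\mathbb{E}[g^{(m-p)}(S)\, g^{(n-p)}(S)]$. For the right hand side, since $g^{(p)}$ is symmetric, the convolution identity yields
$$g^{(m)}(0) \;=\; (g^{(m-p)} \ast g^{(p)})(0) \;=\; \int g^{(m-p)}(t)\, g^{(p)}(-t)\,dt \;=\; \mathbb{E}[g^{(m-p)}(S)],$$
and likewise $g^{(n)}(0) = \mathbb{E}[g^{(n-p)}(S)]$. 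So the claim is equivalent to
$$\mathrm{Cov}\bigl(g^{(m-p)}(S),\, g^{(n-p)}(S)\bigr) \;\geq\; 0.$$

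Next, I would record two structural facts about the densities $g^{(k)}$. They are symmetric about $0$, because the uniform on $[-1/2,1/2]$ is symmetric and convolution preserves symmetry; and they are unimodal with mode at $0$, because the uniform is log-concave, log-concavity is preserved under convolution, and a log-concave symmetric density is non-increasing in $|t|$. Consequently, with $T := |S|$, we can write $g^{(m-p)}(S) = \phi(T)$ and $g^{(n-p)}(S) = \psi(T)$, where $\phi,\psi:[0,\infty)\to\mathbb{R}$ are both non-increasing.

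The last step is standard: taking $T_1, T_2$ i.i.d.\ copies of $T$, the quantity $(\phi(T_1) - \phi(T_2))(\psi(T_1) - \psi(T_2))$ is pointwise non-negative because $\phi$ and $\psi$ are comonotone, and taking expectation and expanding gives $\mathbb{E}[\phi(T)\psi(T)] \geq \mathbb{E}[\phi(T)]\mathbb{E}[\psi(T)]$. Since the symmetry of $S$ gives $\mathbb{E}[\phi(T)] = \mathbb{E}[g^{(m-p)}(S)]$ and similarly for $\psi$, this is exactly the required covariance inequality.

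I do not anticipate a real obstacle; the only things to be careful about are the degenerate endpoints $p=0$ (where $g^{(0)}$ is interpreted as $\delta_0$, turning the inequality into $g^{(m)}(0)g^{(n)}(0) \geq g^{(m)}(0)g^{(n)}(0)$) and $m = p$ (so that $g^{(0)}(t) = \delta_0(t)$ collapses the LHS to $g^{(p)}(0)g^{(n-p)}(0)$, and the inequality reduces to the trivial fact that $g^{(n)}(0) = \int g^{(p)}(s)g^{(n-p)}(s)\,ds \leq g^{(n-p)}(0)$ since $g^{(n-p)}$ attains its maximum at $0$). Both degenerate cases are consistent with the argument above.
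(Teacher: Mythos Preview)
Your proposal is correct and follows essentially the same route as the paper: rewrite $g^{(m)}(0)$ and $g^{(n)}(0)$ via the convolution identity as expectations against the density $g^{(p)}$, then invoke the one-dimensional FKG/Chebyshev correlation inequality for the comonotone functions $g^{(m-p)}$ and $g^{(n-p)}$. The paper simply asserts that the $g^{(k)}$ ``increase and decrease together'' and cites FKG, whereas you supply the justification (symmetry plus unimodality via log-concavity, then passage to $T=|S|$) and spell out the two-copies proof of the correlation inequality; this extra care is welcome but does not change the argument.
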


\noindent See Appendix for the proof of this result.


\vspace{0.2cm}
\begin{remark}\label{r2}
The sequence $\{g^{(n)}(0)\}_{n}$ is decreasing and using the Central Limit Theorem we can prove that $g^{(n)}(0)\asymp h_{n}=\frac{\sqrt{6}}{\sqrt{\pi}}\frac{1}{\sqrt{n}}.$ We also want to emphasize that this approximation is good even for small values of $n$,
\begin{eqnarray*}
g^{(2)}(0)=1 & , & h_{2}=\frac{\sqrt{3}}{\sqrt{\pi}}\approx 0.9772\\
g^{(3)}(0)=\frac{3}{4} & , & h_{3}=\frac{\sqrt{2}}{\sqrt{\pi}}\approx 0.7979\\
g^{(4)}(0)=\frac{2}{3} & , & h_{4}=\frac{\sqrt{3}}{\sqrt{2\pi}}\approx 0.6910
\end{eqnarray*}
\end{remark}

\vspace{0.3cm}

As it was noticed in \cite{GC02}, each partition $\rho\in\mathcal{P}(n)$ with $r$ blocks determines a set of $r$ equations $E_{1},E_{2},\ldots,E_{r}$ in $n$ variables $M_{1},M_{2},\ldots,M_{n}$. These equations have rank $r-1$ and satisfy that 
$$
E_{1}+E_{2}+\ldots+E_{r}=0.
$$ 
Repeating the analysis done in Appendix B of \cite{GC02} we know that $K_{\rho,u}$ can be expressed as probability (or volume) of the solution set 

\begin{equation}
 E_{j}\,:\,\sum_{i\in B_{j}}{M_{i}}=\sum_{i\in B_{j}}{M_{i-1}},
\end{equation}

\noindent with $-\frac{1}{2}\leq M_{k}\leq\frac{1}{2}$ independent and uniform random variables. It was also observed in \cite{GC02} that the probability of this set is always a rational number small or equal than 1 and that $K_{\rho,u}=1$ if and only if the partition is non--crossing.

\subsection{Case r=2}
In this case we have only two blocks and
$\rho=\{B_{1},B_{2}\}$. Therefore we have $n$ i.i.d. uniform
distributed random variables $\{M_{k}\}_{1\leq k\leq n}$ and
$K_{\rho,u}$ is the probability of the set that satisfies the
equations:
$$E_{1}\,:\,\sum_{i\in B_{1}}{M_{i}}=\sum_{i\in B_{1}}{M_{i-1}}$$
and
$$E_{2}\,:\,\sum_{i\in B_{2}}{M_{i}}=\sum_{i\in B_{2}}{M_{i-1}}.$$

\vspace{0.3cm}

\noindent Since $E_{1}+E_{2}=0$ we need only to satisfy one of them. Without loss of generality we can assume that $k=|B_{1}|\leq |B_{2}|=n-k$ and let $2m$ be the set of variables appearing in both sides of equation $E_1$ (after possible cancellation of variables). Note that $m\leq k$ and $m=k$ if there is no cancellation in $E_{1}$. Then 
$$
K_{\rho,u}=(G^{m}\ast G^{m})(0)
$$
where $G(x)=\frac{\sin(\pi x)}{\pi x}$. Since the continuous Fourier transform of the normalized sinc function (to ordinary frequency) is $g(t)=\mathbf{1}_{[-\half,\half]}(t)$ we have that 
$$
K_{\rho,u}=\int_{-\infty}^{+\infty}{(g^{(m)})^{2}(t)\,dt}=g^{(2m)}(0)
$$
\noindent where 
$$
g^{(m)}(t)=\underbrace{(g\ast g\ast\ldots\ast g)}_{m}(t).
$$
Since the sequence $\{g^{(n)}(0)\}_{n}$ is strictly decreasing we have 
$$K_{\rho,u}=g^{(2m)}(0)\geq g^{(2|B_{1}|)}(0)$$

\subsection{Case r=3}

Let us first motivate this case with an example. Consider partition $\rho\in\mathcal{P}(10)$ given by $\rho=\{\{1,5,7\},\{3,9,10\},\{2,4,6,8\}\}$. We only need to consider two of the three equations. Let us consider the ones associated with the smallest blocks, in this case $B_{1}$ and $B_{2}$. Then the corresponding equations are:
$$E_{1}\,:\,M_{1}+M_{5}+M_{7}=M_{10}+M_{4}+M_{6}$$
\noindent and 
$$E_{2}\,:\,M_{3}+M_{9}+M_{10}=M_{2}+M_{8}+M_{9}.$$

After cancellation and arranging common variables to both equations on the same side we obtain:
$$M_{10}=M_{1}+M_{5}+M_{7}+N_{4}+N_{6}$$
\noindent and
$$M_{10}=M_{2}+M_{8}+N_{3}$$
\noindent where $N_{j}=-M_{j}$. Note that by the way we arrange the variables there is no common variable in the RHS of the previous equations. Therefore, it is clear that
$$K_{\rho,u}=\int_{-\infty}^{+\infty}{g(t)g^{(5)}(t)g^{(3)}(t)\,dt}.$$

\noindent Therefore, using Lemma \ref{lem_unif_depend} we see that 
$$K_{\rho,u}\geq g^{(6)}(0)g^{(4)}(0).$$

\noindent Note that $g^{(6)}(0)g^{(4)}(0)$ is equal to product of the 
probability densities for the set of i.i.d. uniform random variables
$X_{1},\ldots,X_{6},Y_{1},\ldots,Y_{4}$ satisfying equations
$X_{1}+\ldots+X_{6}=0$ and $Y_{1}+\ldots+Y_{4}=0$. Since the sequence
$\{g^{(n)}(0)\}_{n}$ is strictly decreasing we proved that
$$K_{\rho,u}\geq g^{(2|B_{1}|)}(0)g^{(2|B_{2}|)}(0).$$

\vspace{0.3cm}

It is clear that the same argument described in the previous example can be carried out in general. Hence, if $\rho\in\mathcal{P}(n)$ with 3 blocks such that $|B_{1}|\leq |B_{2}|\leq |B_{3}|$ then 
$$K_{\rho,u}\geq g^{(2|B_{1}|)}(0)g^{(2|B_{2}|)}(0).$$

\subsection{Conjecture on the General Case}

\noindent Several numerical simulations and examples strongly suggest that the following conjecture is true.

\vspace{0.2cm}
\begin{conjecture}\label{tlb}
Let $\rho\in\mathcal{P}(n)$ be a partition of $n$ elements with $r$ blocks of size $n_{1}\leq n_{2}\leq \ldots\leq n_{r}$. Then 
$$K_{\rho,u}\geq \prod_{j=1}^{r-1}{g^{(2n_{j})}(0)}.$$ 
\end{conjecture}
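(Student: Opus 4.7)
The natural approach is induction on $r$, the number of blocks. The base cases $r=1$ (trivial, empty product equal to $1$) and $r=2,3$ (handled in the preceding subsections) suggest the proof strategy. The key interpretation is $K_{\rho,u} = f_{V_1,\ldots,V_{r-1}}(\mathbf{0})$, the joint density at the origin of the first $r-1$ block residuals $V_j := \sum_{i\in B_j}(M_i - M_{i-1})$, where the $M_i$ are i.i.d.\ uniform on $[-\tfrac{1}{2},\tfrac{1}{2}]$. A structural observation used throughout is that, after cancellation within a single block, each surviving variable $M_i$ appears in exactly two of the $V_j$'s, with opposite signs; i.e., the ``graph'' with blocks as vertices and surviving variables as edges is well-defined.

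For the inductive step with $r\geq 4$, order the blocks by size and form the reduced partition $\rho^{(1,r)}$ by merging the smallest block $B_1$ into the largest block $B_r$. Since $V_1 + V_r = -(V_2+\cdots+V_{r-1})$ (from $\sum_j V_j = 0$), the block residual of the merged block is exactly $V_1 + V_r$, and consequently
\begin{equation*}
K_{\rho^{(1,r)},u} = f_{V_2,\ldots,V_{r-1}}(\mathbf{0}).
\end{equation*}
The sorted block sizes of $\rho^{(1,r)}$ are $n_2,\ldots,n_{r-1}, n_1+n_r$, so the induction hypothesis gives $f_{V_2,\ldots,V_{r-1}}(\mathbf{0}) \geq \prod_{j=2}^{r-1} g^{(2 n_j)}(0)$. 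The conjecture for $\rho$ then reduces to the key inequality
\begin{equation*}
f_{V_1,V_2,\ldots,V_{r-1}}(\mathbf{0}) \;\geq\; g^{(2 n_1)}(0)\cdot f_{V_2,\ldots,V_{r-1}}(\mathbf{0}).
\end{equation*}

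The main obstacle is establishing this key inequality. It decomposes into two pieces: the monotonicity bound $f_{V_1}(0) = g^{(\tilde n_1)}(0)\geq g^{(2 n_1)}(0)$, immediate since $\tilde n_1\leq 2 n_1$ and the sequence $\{g^{(m)}(0)\}_m$ is decreasing; and a genuinely multivariate positive-association statement
\begin{equation*}
f_{V_1, V_{\mathrm{rest}}}(\mathbf{0}) \;\geq\; f_{V_1}(0)\cdot f_{V_{\mathrm{rest}}}(\mathbf{0}),
\end{equation*}
which is precisely Lemma \ref{lem_unif_depend} when $V_{\mathrm{rest}}$ is a single equation. For several equations a genuine extension is needed. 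I would attempt it by iteratively stripping one shared variable at a time: pick $M_k$ shared between $V_1$ and some $V_{j^\star}$, condition on $M_k = t$ to write the joint density at the origin as $\int g(t)\, f_{W_1,W_{j^\star},V_{\mathrm{rest}}}(-t,t,\mathbf{0})\,dt$, then invoke Lemma \ref{lem_unif_depend} inside the integrand and walk along the shared-variable graph. An alternative is a Fourier-analytic argument expressing the joint density at the origin as a single $(r-1)$-dimensional integral of $\prod_k \hat{g}(\langle a_k,\mathbf{t}\rangle)$ with $a_k\in\{0,\pm 1\}^{r-1}$ and exploiting structural properties of $\hat g(s)=2\sin(s/2)/s$. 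Proving this multivariate extension of Lemma \ref{lem_unif_depend} is the essential technical crux; once available, the induction closes cleanly and the conjectured bound follows.
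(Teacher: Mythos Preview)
The statement is labeled a \emph{conjecture} in the paper, and the paper does not prove it. The paper establishes only the cases $r=2$ and $r=3$ (the subsections immediately preceding the conjecture) and then writes ``Several numerical simulations and examples strongly suggest that the following conjecture is true.'' There is therefore no paper proof to compare against.

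Your proposal is a reasonable proof \emph{strategy}, and the inductive reduction you describe is sound: merging $B_1$ into $B_r$ does give a partition whose expansion coefficient equals $f_{V_2,\ldots,V_{r-1}}(\mathbf 0)$, and the block sizes sort as you claim, so the induction hypothesis yields $\prod_{j=2}^{r-1} g^{(2n_j)}(0)$. But you correctly identify that the entire argument rests on the multivariate positive-association inequality
\[
f_{V_1,V_2,\ldots,V_{r-1}}(\mathbf 0)\;\geq\; f_{V_1}(0)\cdot f_{V_2,\ldots,V_{r-1}}(\mathbf 0),
\]
and this is not proved. Lemma \ref{lem_unif_depend} (the FKG-type inequality for a single pair of equations) handles $r=3$, but the ``strip one shared variable at a time'' iteration you sketch does not obviously close: once you condition on a shared variable $M_k$, the remaining density $f_{W_1,W_{j^\star},V_{\mathrm{rest}}}(-t,t,\mathbf 0)$ is still a genuinely multivariate object in which $W_1$ may share further variables with several of the $V_\ell$'s simultaneously, so you are not reduced to a univariate FKG situation. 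The Fourier-analytic alternative you mention faces the same structural obstacle, since $\hat g$ changes sign and the integrand is not a product of unimodal factors in separated variables.

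In short, your proposal does not prove the conjecture; it isolates the same missing ingredient that leaves the statement open in the paper.
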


\noindent If the previous conjecture holds then using Remark \ref{r2} we have that for $n$ sufficiently large
\begin{equation}\label{eqlab2}
K_{\rho,u}\geq\prod_{j=1}^{r-1}{g^{(2n_{j})}(0)}\approx\prod_{j=1}^{r-1}{h_{2n_{j}}}=\Bigg(\frac{6}{\pi}\Bigg)^{\frac{r-1}{2}}\prod_{j=1}^{r-1}
{\frac{1}{\sqrt{2n_{j}}}}.
\end{equation}
By the Arithmetic and Geometric mean inequality 
$$
n_{1}\ldots n_{r-1}\leq \Bigg(\frac{n_{1}+\ldots+n_{r-1}}{r-1}\Bigg)^{r-1}\leq \Bigg(\frac{n}{r-1}\Bigg)^{r-1}.
$$ 
Therefore,
$$
\Bigg(\frac{r-1}{n}\Bigg)^{\frac{r-1}{2}}\leq \frac{1}{\sqrt{n_{1}\cdots n_{r-1}}}.
$$
\noindent Hence, plugging the previous inequality in equation (\ref{eqlab2}) we obtain
\begin{equation}
K_{\rho,u}\geq \Bigg(\frac{3(r-1)}{\pi n}\Bigg)^{\frac{r-1}{2}}.
\end{equation}

Let us define $L(k,n):=\Big(\frac{3(k-1)}{\pi n}\Big)^{\frac{k-1}{2}}$. Then the previous conjecture implies that $K_{\rho,u}\geq L(r,n)$ for any partition of $n$ elements with $r$ blocks. On the other hand, it was observed in \cite{SP} and \cite{GC02} that $K_{\rho,u}=1$ if and only if the partition is non--crossing. Let $T(k,n)$ be the Narayana numbers i.e., the numbers of non--crossing partition of $n$ elements in $k$ blocks. It is known that 
$$T(k,n)=\frac{1}{k}{n-1 \choose k-1}{n \choose k-1}$$
see \cite{On_en}. Therefore applying the previous analysis we have the following lower bound for the $n$--th moment:
\begin{equation}\label{lowbound}
C_{n}+\sum_{k=1}^{n}{(s(k,n)-T(k,n))\cdot L(k,n)}\leq m_{n} 
\end{equation}
where 
$$
s(k,n)=\frac{1}{k!}\sum_{i=0}^{k}{(-1)^i{k \choose i}(k-i)^{n}}
$$ 
is the Stirling number of the second kind and $C_{n}=\frac{1}{n+1}{2n \choose n}$ is the Catalan number which counts the number of non--crossing partitions of $n$ elements. They also arise as the moments of the semicircular distribution as in (\ref{semicircular}).

\noindent Assuming that Conjecture \ref{tlb} is true and using Harper's Theorem \cite{Harper} it can then be shown that the following Theorem holds.
\vspace{0.3cm}
\begin{theorem}
 Let $B_{n}$ be the $n$--th Bell number, $\sigma_{n}=\big(B_{n+2}/B_{n}-(B_{n+1}/B_{n})^{2}-1\big)^{\frac{1}{2}}$ then
\begin{equation}\label{eqlb3}
 \frac{1}{\sqrt{2\pi}}\cdot\frac{B_{n}}{\sigma_{n}}\cdot\Bigg( \frac{3}{\pi e \log{n}} \Bigg)^{\frac{n}{2e\log{n}}}\leq m_{n}\leq B_{n}.
\end{equation}

\noindent Moreover,
\begin{equation}
 \lim_{n}{\Bigg(\frac{m_{n}}{B_{n}}\Bigg)^{\frac{1}{n}}}=1.
\end{equation}
\end{theorem}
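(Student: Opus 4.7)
The upper bound $m_n \leq B_n$ is essentially free: at $c=1$ the moment formula reads $m_n=\sum_{\rho\in\mathcal{P}(n)}K_{\rho,u}$, and since $0<K_{\rho,u}\leq 1$ (equality holding exactly on non-crossing partitions) one immediately gets $m_n\leq|\mathcal{P}(n)|=B_n$. So everything interesting is on the lower side.

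For the lower bound, the plan is to invoke Conjecture \ref{tlb} in the already-packaged form $K_{\rho,u}\geq L(r,n)$ (with $r=|\rho|$) that was derived via the AM--GM step. Grouping partitions by their number of blocks $k$ gives
\begin{equation*}
m_n \;\geq\; \sum_{k=1}^{n} s(k,n)\, L(k,n).
\end{equation*}
Rather than try to estimate the whole sum, I would keep only the single term $k=J_n$, where $J_n$ is the mode from Harper's Theorem \ref{Harper}. By that theorem $J_n\asymp n/(e\log n)$ and $s(J_n,n)\asymp \frac{1}{\sqrt{2\pi}}\,B_n/\sigma_n$. Substituting $k=J_n$ into $L(k,n)=\left(\tfrac{3(k-1)}{\pi n}\right)^{(k-1)/2}$ and using $J_n-1\asymp n/(e\log n)$ in both the base and the exponent yields
\begin{equation*}
L(J_n,n)\;\asymp\;\Bigl(\tfrac{3}{\pi e\log n}\Bigr)^{n/(2e\log n)}.
\end{equation*}
Multiplying the two asymptotics delivers exactly the claimed lower bound (\ref{eqlb3}).

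For the limit $\lim_n (m_n/B_n)^{1/n}=1$, the upper bound $m_n\leq B_n$ already gives $\limsup\leq 1$. For the matching $\liminf$ I would take $n$-th roots in the lower bound and examine each factor: $(2\pi)^{-1/(2n)}\to 1$ and $\sigma_n^{-1/n}\to 1$ by Remark \ref{r3}, while
\begin{equation*}
\Bigl(\tfrac{3}{\pi e\log n}\Bigr)^{1/(2e\log n)}
\;=\;\exp\!\Bigl(\tfrac{\log(3/(\pi e))-\log\log n}{2e\log n}\Bigr)\;\longrightarrow\;1
\end{equation*}
since $\log\log n=o(\log n)$. Hence $\liminf_n (m_n/B_n)^{1/n}\geq 1$, giving the stated limit.

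I expect the only non-routine step to be making the Harper asymptotic for $s(J_n,n)$ interact cleanly with the conjectured bound: one needs that $J_n$ is an integer close enough to $n/(e\log n)$ that both the base $3(J_n-1)/(\pi n)$ and the exponent $(J_n-1)/2$ match the target expressions up to factors that disappear when one takes the $n$-th root. This is a matter of keeping the $\asymp$ bookkeeping tight, but no genuinely new estimate beyond Harper's Theorem and Remark \ref{r3} is required.
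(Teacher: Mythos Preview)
Your proposal is correct and follows essentially the same route as the paper: both bound $m_n$ below by $\sum_k s(k,n)L(k,n)$, retain only the modal term $k=J_n$, and invoke Harper's asymptotics for $s(J_n,n)$ and $J_n$ together with Remark~\ref{r3}. Your handling of the limit is in fact slightly cleaner than the paper's: you correctly argue $\limsup\le 1$ and $\liminf\ge 1$, whereas the paper records the complementary (still true, but not by themselves sufficient) inequalities $\liminf\le 1$ and $\limsup\ge 1$.
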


\begin{proof}
Since $m_{n}=\sum_{\rho\in\mathcal{P}(n)}{K_{\rho,u}}$ and since $L(B,n)\leq K_{\rho,u}\leq 1$ for any partition with $B$ blocks we see that $m_{n}\leq B_{n}$ (as was already noted in \cite{GC02}) and 
$$\sum_{j=1}^{n}{s(j,n)L(j,n)}\leq m_{n}$$
which is a weaker inequality than Equation (\ref{lowbound}).
\noindent It is clear that $s(J_{n},n)L(J_{n},n)\leq \sum_{j=1}^{n}{s(j,n)L(j,n)}$. Using Harper's Theorem we see that 
$$s(J_{n},n)\asymp \frac{1}{\sqrt{2\pi}}\cdot\frac{B_{n}}{\sigma_{n}},\hspace{0.3cm}J_{n}\asymp \frac{n}{e\log{n}}$$

\noindent and 
$$L(J_{n},n)\asymp\Bigg( \frac{3(J_{n}-1)}{\pi n}\Bigg)^{\frac{J_{n}-1}{2}}\asymp \Bigg( \frac{3}{\pi e \log{n}}\Bigg)^{\frac{n}{2e\log{n}}}$$ 

\noindent Hence proving equation (\ref{eqlb3}).

\vspace{0.3cm}
To prove the second part of the Theorem we first note that since $m_{n}\leq B_{n}$ then 
$$\liminf_{n}{\Big(\frac{m_{n}}{B_{n}}\Big)^{\frac{1}{n}}}\leq 1.$$ 

\noindent On the other hand, using equation (\ref{eqlb3}) we have
$$m_{n}^{\frac{1}{n}}\geq \Bigg(\frac{1}{\sqrt{2\pi}}\Bigg)^{\frac{1}{n}}\cdot \Bigg(\frac{3}{\pi}\Bigg)^{\frac{1}{2e\log n}}\cdot \Bigg(\frac{B_{n}}{\sigma_{n}}\Bigg)^{\frac{1}{n}}\cdot \Bigg(\frac{1}{e\log{n}}\Bigg)^{\frac{1}{2e\log{n}}}.$$

\noindent Since $\lim_{n}\Big(\frac{1}{\sqrt{2\pi}}\Big)^{\frac{1}{n}}=\lim_{n}\Big(\frac{3}{\pi}\Big)^{\frac{1}{2e\log n}}=1$ and 
$$\lim_{n}{ \Big(\frac{1}{e\log{n}}\Big)^{\frac{1}{2e\log{n}}}}   =\lim_{n}{\mathrm{exp}\Big( -\frac{\log\log n}{\log n}\Big)}=1.$$

\noindent We have that 
$$\limsup_{n}{\Bigg(\frac{m_{n}}{B_{n}}\Bigg)^{\frac{1}{n}}}\geq \limsup_{n}{\Bigg(\frac{1}{\sigma_{n}}\Bigg )^{\frac{1}{n}}}=1$$
\noindent where the last equality comes from Remark \ref{r3}. 
\end{proof}


\section{Capacity of the Vandermonde Channel}
\label{applications}
Consider the Gaussian matrix channel \cite{emre} in which the received signal $ y \in \cmplx^N$ is given as 
\begin{equation}
y = {\bf H} x + z
\end{equation}
where $ z\sim {\cal CN}({\bf 0},{\bf I}_N)$,  $x \sim {\cal
CN}({\bf 0},{\bf I}_L)$, and ${\bf H}\in
\cmplx^{N \times L}$ has i.i.d. zero mean Gaussian entries and is
{\em standard} see \cite{Verdu}.  
Then an explicit expression for the asymptotic capacity exists \cite{Verdu}:
$$
\lim_{N\to\infty}{\frac{1}{N}\log\mathrm{det}\Big(I_{N}+\gamma{\bf HH^{*}}\Big)}=-\frac{\log e}{4\gamma}F(\gamma,\beta)
$$
$$
+\,\,\beta\log\Big( 1+\gamma -\frac{1}{4}F(\gamma,\beta) \Big) + \log\Big(1+\beta\gamma -\frac{1}{4}F(\gamma,\beta) \Big) \\
$$
where
$$
F(a,b):=\Big( \sqrt{a\lb 1 + \sqrt{b}\rb^2+1}-
\sqrt{a\lb 1-\sqrt{b}\rb^2+1}\Big)^2
$$
and the SNR $\gamma$ is 
$$
\gamma = \frac{N\cdot\Exp{ \vert \vert {\bf x} \vert\vert^2}}{L\cdot
\Exp{ \vert \vert {\bf z} \vert\vert^2}}
$$
and the ratio $\frac{N}{L}\to\beta$ as $N \rightarrow \infty$.

\vspace{0.2cm}
\noindent We can prove that a similar limit exists and is finite if
the Gaussian matrix ${\bf H}$ is replaced with a random
Vandermonde matrix. Moreover, using Jensen's inequality we can get an
upper bound on the capacity. More precisely, if we fix $\gamma > 0$ to
be the SNR, we may  define $C_{V}(\gamma)$ the asymptotic capacity of the
Vandermonde channel (whenever the limiting moments exist and 
define a measure) for random Vandermonde matrices 
${\bf V} \in \cmplx^{N \times L}$  to be 
\begin{eqnarray}
\label{eqn_caplim}
C_{V}(\gamma) & := &
\lim_{N\to\infty}{\mathbb{E}\Bigg(\frac{1}{N}\log\mathrm{det}\Big(I_{N}+\gamma
{\bf VV^{*}}\Big)}\Bigg) \\
& = &\lim_{N\to\infty}{\mathbb{E}\Bigg(\frac{1}{N}\log\mathrm{det}\Big(I_{L}
+\gamma {\bf V^{*}{\bf V}}\Big)}\Bigg) \nonumber\\
& = & \lim_{N\to\infty}{\mathbb{E} \Big(\mathrm{tr}_N \log \Big( {\bf I}_L + \gamma {\bf
V^{*}{\bf V}}\Big)} \Big)  \nonumber \\
& = & \lim_{L\to\infty}\int_{0}^\infty{c\log(1+\gamma t)\,d\mu_{L}(t)}\nonumber\\
& = & \int_{0}^\infty{c\log(1+\gamma t ) d\mu_c(t) } \nonumber
\end{eqnarray}
where $\mu_{L}$ is the empirical measure of the $L\times L$ random
matrix ${\bf V^{*}V}$ and $\mu_c$ is the limit measure of the $\mu_{L}$. The
first equality follows from Sylvester's Theorem on determinants, the
second and third are by definition, and the final equality is a
consequence of their uniform integrability.  This latter  follows
from  $\log \lb 1 + \gamma t \rb < \gamma t, t > 0$   and that given
$\varepsilon > 0$,
$\exists \alpha > 0$ such that
$$
\sup_L \int_\alpha^\infty t d\mu_L(t) < \varepsilon
$$
see the converse statement in \cite{Billingsley68} Theorem 5.4.

\vspace{0.2cm}
\noindent Therefore, by Jensen's inequality
\begin{eqnarray*}
C_{V}(\gamma) & = & c\int_{0}^{\infty}{\log(1+\gamma t)\,d\mu_{c}(t)}\\
& \leq &  c\log(1+\gamma).
\end{eqnarray*}
since the limit first moment is 1. 

\vspace{0.2cm}
\noindent As an application of the above consider the network with $M$ mobile users conducting synchronous
multi-access to a base station with $N$ antenna elements, arranged
as a uniform linear array, \cite{Krim}. The network is depicted in
Figure \ref{fig_ulanetwork}, showing the directions of arrival of
two mobiles A and B.
\begin{figure}[!Ht]
  \begin{center}
\hspace{-0.5in}
    \includegraphics[width=3.2in]
{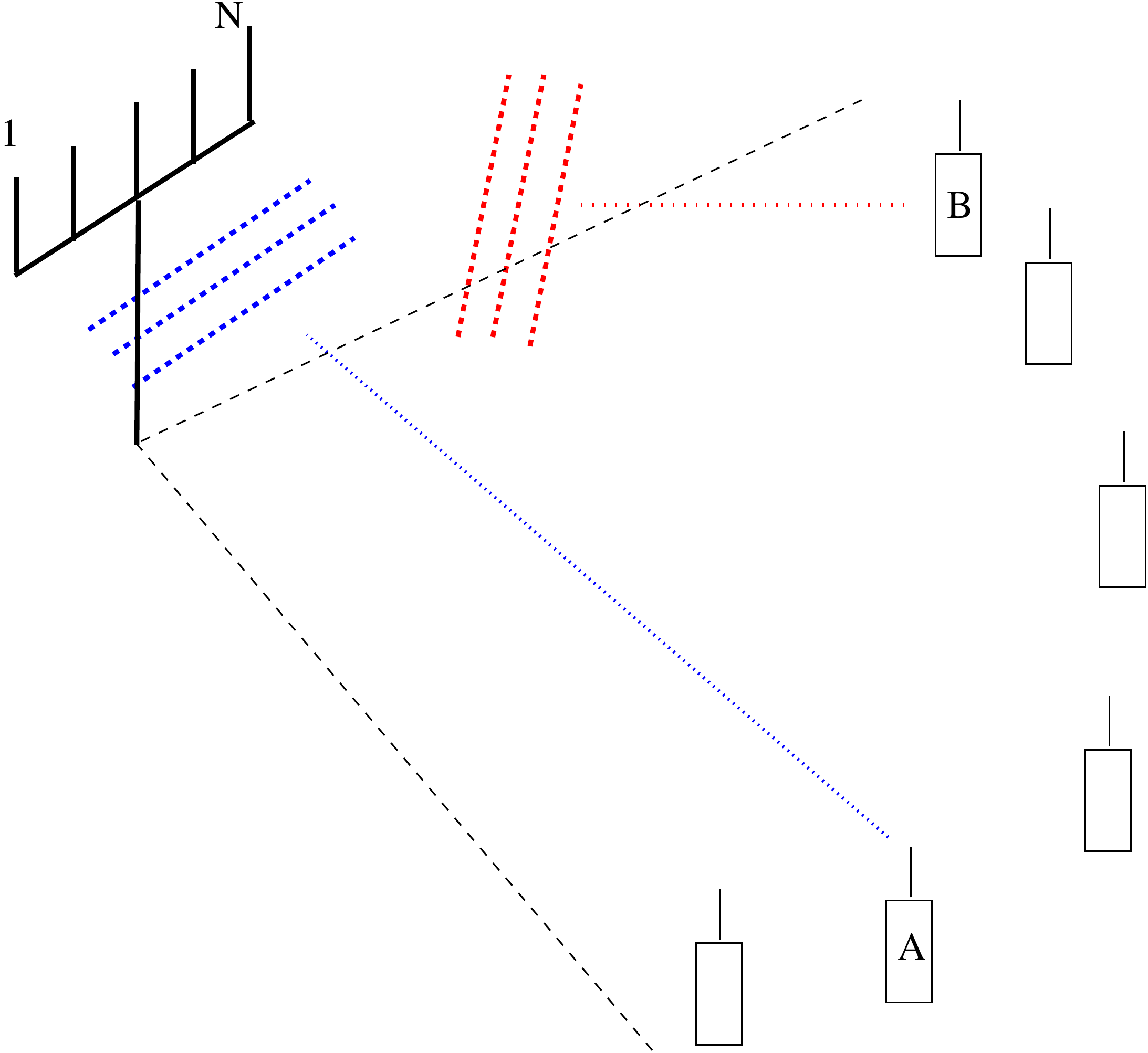}
    \caption{Multi access network with scheduling with $M$ mobile users conducting synchronous
multi-access to a base station with $N$ antenna elements, arranged
as a uniform linear array.}
    \label{fig_ulanetwork}
  \end{center}
\end{figure}

\vspace{0.2cm}
\noindent Suppose a random subset of $L$  users in any time slot are selected to transmit. Then the antenna array response over 
the selected users is ${\bf V}$ equals to
\begin{equation}
\label{eqn_ula}
\frac{1}{\sqrt{N}}
\lb
\begin{array}{ccc}
1  & \cdots & 1 \\
e^{-2 \pi i \frac{d}{\lambda} \sin(\theta_1)} & \cdots & e^{-2 \pi i
\frac{d}{\lambda} \sin(\theta_L)} \\
\vdots & \vdots & \vdots \\
e^{-2 \pi i (N-1) \frac{d}{\lambda} \sin(\theta_1)} & \cdots & e^{-2 \pi
i(N-1) \frac{d}{\lambda} \sin(\theta_L)}
\end{array}
\rb
\end{equation}
where $d$ is the element spacing and $\lambda$ is the wavelength, see
also \cite{GC02}. Let us suppose that $M,L,N$ are large
and that the   
angles of arrival are uniformly scattered in $\lb -\alpha, \alpha
\rb$. Then it is reasonable to determine performance supposing that
the angles of arrival are drawn uniformly so that the maximum sum
throughput (equivalently per user rate) is determined by
(\ref{eqn_caplim}) with the phase pdf given as,
$$
q_{\alpha}(\theta)=\frac{1}{2\alpha\sqrt{\frac{4\pi^2d^2}{\lambda^2}-\theta^2}} 
$$ 
for $\theta\in [-\frac{2\pi d\sin\alpha}{\lambda},\frac{2\pi
d\sin\alpha}{\lambda}]$.  The above example is largely illustrative
and a more realistic one would depict a network of users with
heterogeneous received powers corresponding to varying mobile
distances. Such generalizations as this and others can be treated
using the results given here but we do not go into details.

In Figure \ref{fig_capcurve} we show the capacity for uniform phase and with the phase distribution density
$q_\alpha(\theta)$ corresponding to the uniform linear array with 
$\lambda=2d$ and various values of $\alpha$.

\section{Numerical Results}
\label{sec_numerfin}
\subsection{Simulations for the Limiting Measure}
In this Section we present some numerical results and simulations. In Figure \ref{fig_unifpdf} we show the simulation results obtained
for 700 independent samples of a square matrix with $N=1000$ rows.
The plot shows the sample histogram for the eigenvalues.
The results strongly suggest that there is an atom at 0 in
the limiting measure. As can be seen there is very little probability
mass above 4 which is consistent with our bounds for the maximum
eigenvalue.
\begin{figure}[!Ht]
  \begin{center}
    \includegraphics[width=3.5in, height=3.0in]
{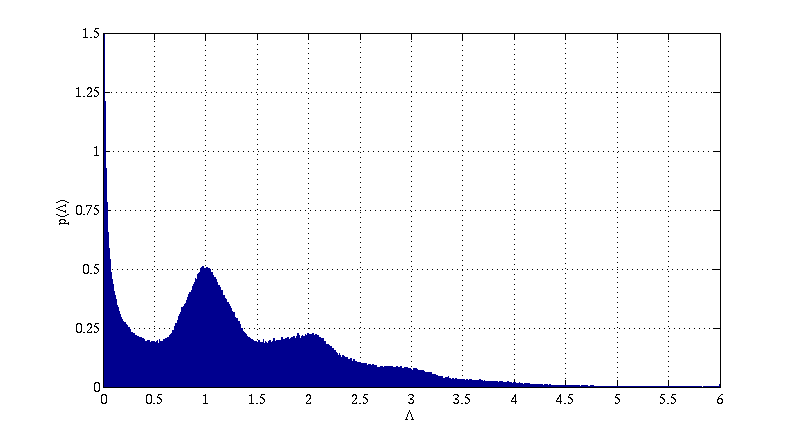}
    \caption{Simulated limit distribution for $\theta \sim U[-\pi,\pi]$ with $L=N=1000$ averaged over 700 sample matrices.}
    \label{fig_unifpdf}
  \end{center}
\end{figure}

In the following Figure we consider the pdf given in equation
(\ref{eqn_goodpdf}) and the simulation described above was 
repeated. Our results are shown in Figure \ref{fig_logeigpdf}.
\begin{figure}[!Ht]
  \begin{center}
    \includegraphics[width=3.5in, height=3.0in]
{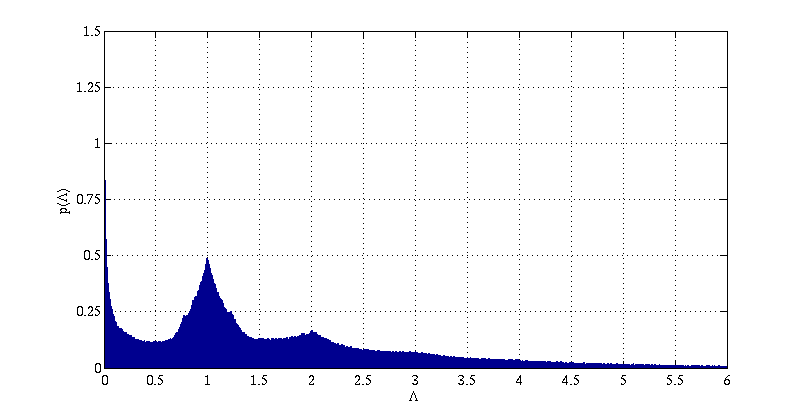}
    \caption{Simulated limit distribution for $\theta \sim f(x)$, see (\ref{eqn_goodpdf}), with $L=N=1000$ averaged over 700 sample matrices.}
    \label{fig_logeigpdf}
  \end{center}
\end{figure}
The results are similar to the uniform case, in that an atom at 0
is strongly suggested. Also the pdf possesses peakes at 1 and 2
as was found in the uniform case. It should be noted that the right
tail is more spread out than before. 

\subsection{Moment Bounds}

Figure \ref{fig_mombnd} depicts the upper and lower bound for the limiting moments $m_n$ as derived in Section 
\ref{sec_lowerbnd}. The horizontal axis is the moment index and the vertical axis is the (natural) log of
the bound.  
\begin{figure}[!Ht]
  \begin{center}
    \includegraphics[width=3.5in]
{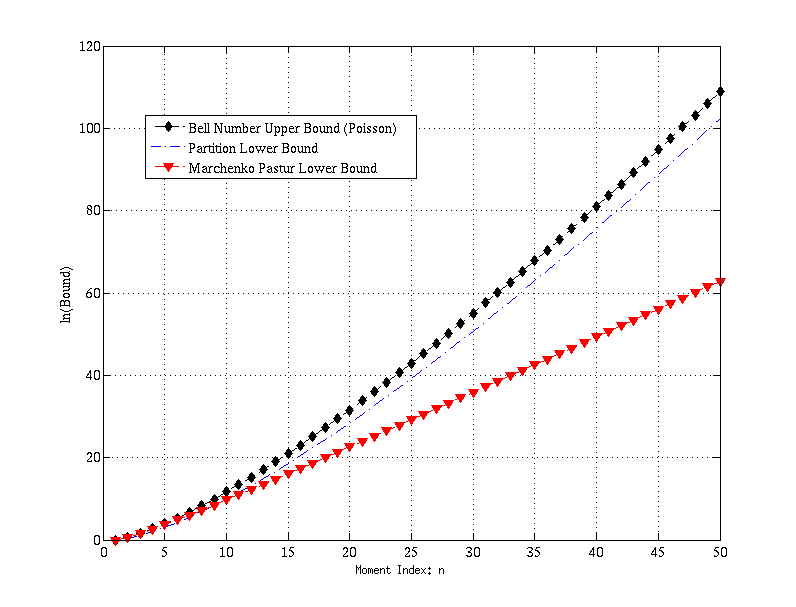}
    \caption{Upper and lower bounds for $m_{n}$.}
    \label{fig_mombnd}
  \end{center}
\end{figure}
As the Figure shows, the eigenvalue moments lie much more closely
to the Bell Number upper bound than they do to the bound
corresponding to the Marchenko-Pastur distribution (MP), see also
\cite{GC02}. Close inspection shows that the partition lower
bound lies belows  MP. This is because the asymptotic
formula, based on the central limit theorem, is being used as opposed
to the bound constructed from the peaks of uniform
densities. The moments themselves are not depicted.

In the following we use the more accurate version of the bound see Table \ref{table_mombnd}.
\begin{table}[Hht]
 \begin{center}
  \caption{Moments and Moment Bounds, $d=1$ (Vandermonde)}
  \begin{tabular}{|c|c|c|c|c|} \hline
    $n$  & $C_n$ & $L_n$ & $m_n$    & $B_n$ \\ \hline
     4   & 14    &  $14\frac{2}{3}$ &  $14\frac{2}{3}$ & 15    \\ \hline
     5   & 42    &  $48\frac{2}{3}$ &  $48\frac{2}{3}$ & 52    \\ \hline
     6   & 132   &  $\approx 176.2944$ & 178.55   & 203\\ \hline
     7   & 429    & $\approx 684.4611$      &   713.66667  & 877    \\ \hline
  \end{tabular}
  \label{table_mombnd}
 \end{center}
\end{table} 
Here $n$ is the moment index, $C_n$ is the $n$--th Catalan number, $L_n$
is our lower bound, $m_n$ the moment and $B_n$ the Bell number upper
bound. 

We now present the corresponding results for the empirical eigenvalue measure
from \cite{SP} when $d=2$. As before,  we use the more accurate version
of the bound see table \ref{table_sigmombnd} and with the same
notation. The exact moments are obtained by squaring the values of
the crossing partitions. For large $d$ the Marchenko-Pastur limit is 
approached as the contribution of the crossing partitions goes to 0. 
\begin{table}[Hht]
 \begin{center}
  \caption{Moments and Moment Bounds}
  \begin{tabular}{|c|c|c|c|c|} \hline
    $n$  & $C_n$ & $L_n$ & $m_n$    & $B_n$ \\ \hline
     4   & 14    &  $14\frac{4}{9}$ &  $14\frac{4}{9}$ & 15    \\ \hline
     5   & 42    &  $46\frac{4}{9}$ &  $46\frac{4}{9}$ & 52    \\ \hline
     6   & 132   &  $\approx 160.0928$ & 162.6358   & 203\\ \hline
     7   & 429    & $\approx 579.1567$  & 619.6256  & 877    \\ \hline
  \end{tabular}
  \label{table_sigmombnd}
 \end{center}
\end{table} 
As can be seen in this example the lower bound, $L_n$, once again provides a much more
accurate estimate of the moments than $C_n$.

\subsection{Maximum Eigenvalue}
We now present results for the maximum eigenvalue $\lambda_N$. Figure
\ref{fig_maxeigunif} shows the theoretical upper and lower
bounds, together with simulated results for $\theta \sim U(-\pi,\pi)$.
\begin{figure}[!Ht]
  \begin{center}
    \includegraphics[width=3.5in]
{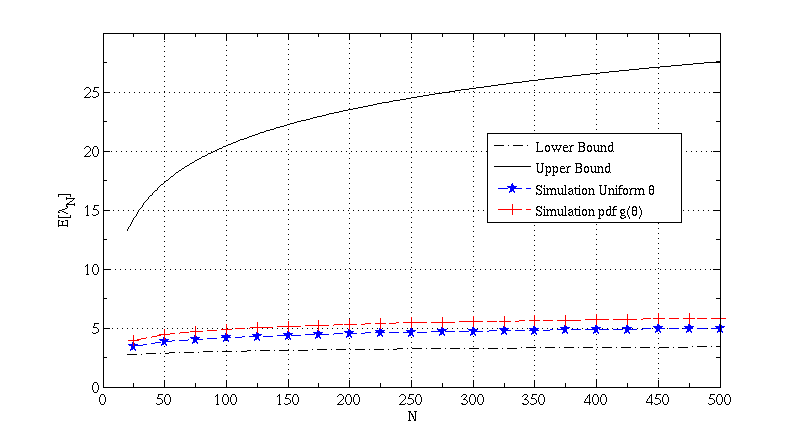}
    \caption{Upper and lower Bounds for the maximum eigenvalue with
    simulation results.}
    \label{fig_maxeigunif}
  \end{center}
\end{figure}
The results are for matrices with up to $N=500$ and the sample mean
values of the maximum eigenvalue were obtained using 10,000 random
matrices per point. As can be seen the results follow the lower bound
closely. In a second experiment the bounded pdf (with two
discontinuity points) was simulated, 
$$
g(\theta):= \frac{2}{\lb \pi \sqrt{\pi^2-\theta^2}\rb}, \,\,\,\,\,\theta \in
[-\frac{\pi}{\sqrt{2}},\frac{\pi}{\sqrt{2}}].
$$
The unbounded pdf $p_\alpha(x)$ see equation (\ref{eqn_badpdf}) was also
simulated for $\alpha = \frac{1}{3}, \frac{1}{2} , \frac{2}{3}$ and $0.9$. The results are shown in
Figure \ref{fig_unbndmaxeig}.
\begin{figure}[!Ht]
  \begin{center}
    \includegraphics[width=3.5in]
{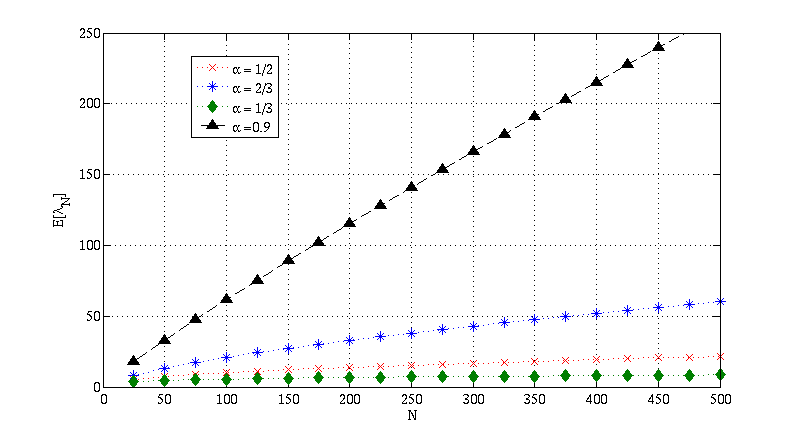}
    \caption{Maximum eigenvalue for the distribution $p_\alpha(\theta)$.}
    \label{fig_unbndmaxeig}
  \end{center}
\end{figure}

\subsection{Results for the Capacity}
We now consider the capacity of the limiting Vandermonde channel, shown to exist in Section \ref{applications}. Simulations were conducted for the uniform distribution and for the distribution with density
$$
q_\alpha(\theta)=\frac{1}{2\alpha\sqrt{\frac{4\pi^2d^2}{\lambda^{2}}-\theta^{2}}}
$$ 
with $\alpha = \frac{\pi}{8},\frac{\pi}{4}$ and $\frac{\pi}{3}$ and with square matrices size $N=100$ and $\lambda=2d$. The distribution $q_\alpha$ with $\alpha = \frac{\pi}{4}$ is depicted in Figure \ref{fig_q_alppdf}. The results are graphed in Figure \ref{fig_capcurve}
\begin{figure}[!Ht]
  \begin{center}
    \includegraphics[width=3.5in]
{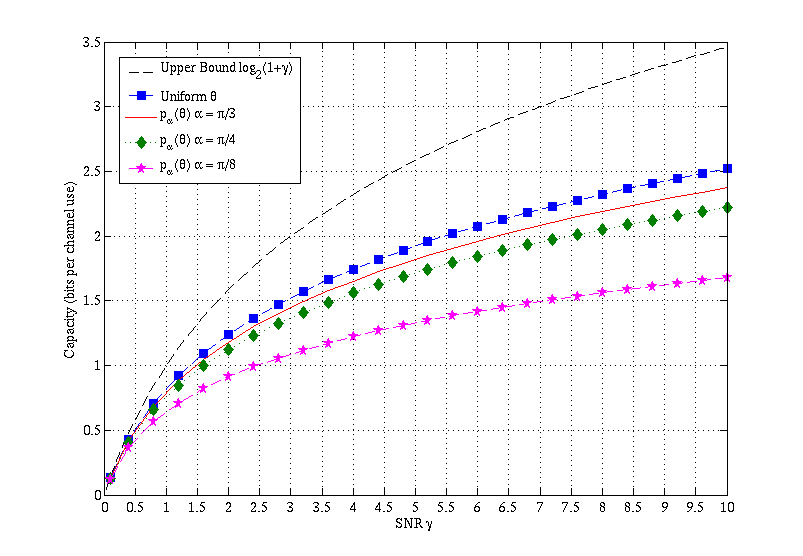}
    \caption{Capacity per channel use per degree of freedom.}
    \label{fig_capcurve}
  \end{center}
\end{figure}
and show that the capacity diminishes as $\alpha$ becomes smaller. For the largest value $\alpha =\frac{\pi}{3}$ capacity is very close to that
determined by the uniform which we included for reference. All curves lie below the Jensen inequality bound, $\log_2(1+\gamma)$.
\begin{figure}[!Ht]
  \begin{center}
    \includegraphics[width=3.5in]
{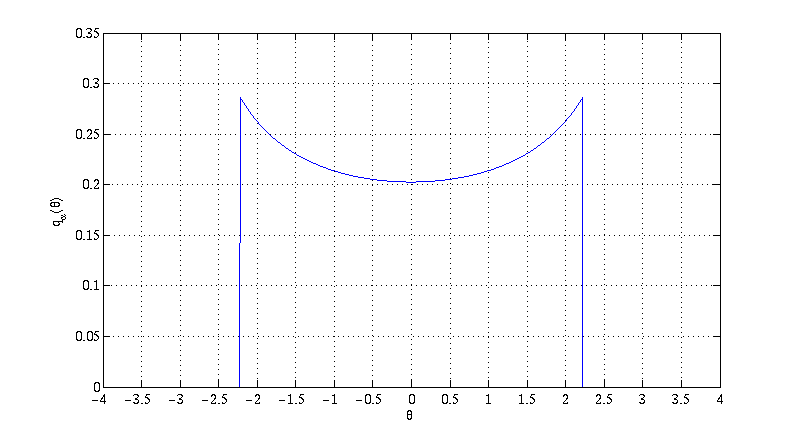}
    \caption{Distribution $q_{\alpha}(\theta)$ with $\lambda=2d,\alpha=\frac{\pi}{4}$.}
    \label{fig_q_alppdf}
  \end{center}
\end{figure}

\section{Conclusions}%
\label{sec_conc}%
The main results of this paper are upper and lower bounds for the maximum eigenvalue of random unit magnitude Vandermonde matrices. Additionally, the limit measure was shown to exist for a wide class of densities. When the densities were not continuous it was required that the Fourier series coefficients be non--negative. This latter condition while it is sufficient appears far from necessary. 

Additional results include the existence of the capacity of the Vandermonde channel (limit integral for the expected log determinant). We also provide evidence to support a conjecture on the lower bound on the Vandermonde expansion coefficients.  We also show that this conjecture implies a tight lower bound for the moment sequence.

As far as the limit eigenvalue distribution is concerned we conjecture that there is an atom at $0$ and not other atoms. The size of this atom will of course depend on the distribution of the phases. This matter can be investigated numerically by examination of the fraction of eigenvalues near 0 as the matrix dimensions go to infinity. 

\appendices
\section{Elementary Lemma and Corollary regarding the eigenvalues of ${\bf V^{*}V}$}
We show that there is a real symmetric matrix which has the
same eigenvalues as ${\bf V^{*}V}$ where ${\bf V}$ is the random Vandermonde matrix.
\vspace{0.3cm}
\begin{lemma}
\label{lemma_equiv}
Let ${\bf A}$ and ${\bf B}$ be $N\times N$ complex matrices and suppose that
\begin{equation}
\label{eqn_component}
B_{k,\ell} = \frac{c_k}{c_\ell} A_{k,\ell}
\end{equation}
with $c_k \neq 0$ for every $k=1,2,\ldots, N$. Then ${\bf A}$ and ${\bf B}$ have the same eigenvalues.
\end{lemma}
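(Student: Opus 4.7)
The plan is to recognize the relationship between $\mathbf{A}$ and $\mathbf{B}$ as a similarity transformation by a diagonal matrix, from which equality of eigenvalues is immediate. Let $\mathbf{D} = \operatorname{diag}(c_1, c_2, \ldots, c_N)$. Since each $c_k$ is nonzero, $\mathbf{D}$ is invertible with $\mathbf{D}^{-1} = \operatorname{diag}(c_1^{-1}, \ldots, c_N^{-1})$.

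The key step is to verify by direct computation that $\mathbf{B} = \mathbf{D}\mathbf{A}\mathbf{D}^{-1}$. Indeed, for any $k,\ell$,
\begin{equation*}
\bigl(\mathbf{D}\mathbf{A}\mathbf{D}^{-1}\bigr)_{k,\ell} = c_k \, A_{k,\ell} \, c_\ell^{-1} = \frac{c_k}{c_\ell} A_{k,\ell} = B_{k,\ell},
\end{equation*}
so the identity holds entry by entry. Thus $\mathbf{A}$ and $\mathbf{B}$ are similar matrices, and similar matrices share the same characteristic polynomial and hence the same eigenvalues (with multiplicities).

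There is essentially no obstacle here; the only thing to check carefully is the invertibility of $\mathbf{D}$, which is guaranteed by the hypothesis $c_k \neq 0$. The corollary application to $\mathbf{V}^*\mathbf{V}$ will follow by taking $A_{k,\ell}$ and $B_{k,\ell}$ to be the entries of the Vandermonde Gram matrix and the sinc-kernel matrix $\mathbf{X}_N$ respectively, with $c_k = e^{\mathrm{j}(N-1)\theta_k/2}$ (or a similar phase factor), so that the desired ratio $c_k/c_\ell$ matches the phase difference that converts the Dirichlet-kernel form into $\mathbf{V}^*\mathbf{V}$.
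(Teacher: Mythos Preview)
Your proof is correct and is essentially the same idea as the paper's, just phrased more abstractly: the paper shows directly that if $\mathbf{A}\mathbf{v}=\lambda\mathbf{v}$ then $\mathbf{B}\mathbf{u}=\lambda\mathbf{u}$ with $u_\ell=c_\ell v_\ell$, which is precisely the statement $\mathbf{u}=\mathbf{D}\mathbf{v}$ arising from your similarity $\mathbf{B}=\mathbf{D}\mathbf{A}\mathbf{D}^{-1}$. Your formulation has the minor advantage of giving equality of algebraic multiplicities for free via the characteristic polynomial, whereas the paper's eigenvector argument as written only matches eigenvalues set-wise.
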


\begin{proof}
Suppose that $\lambda$ is an eigenvalue of ${\bf A}$ with corresponding eigenvector ${\bf v}$. Set the vector ${\bf u}$ as
$$
u_\ell = c_\ell v_\ell.
$$
Multiplying ${\bf u}$ by ${\bf B}$ gives,
$$
\sum_\ell B_{k,\ell} u_\ell = c_k \sum_\ell A_{k,\ell} v_\ell = \lambda c_k v_k 
= \lambda u_k.
$$
Thus any eigenvalue of ${\bf A}$ is also an eigenvalue of ${\bf B}$. Since also
\begin{equation}
\label{eqn_AtoB}
A_{k,\ell} = \frac{d_k}{d_\ell} B_{k,\ell}
\end{equation}
with $d_k = 1/c_k \neq 0$ and $k=1,\ldots,N$, we find that all the eigenvalues of ${\bf B}$ are also eigenvalues of ${\bf A}$ mutatis mutandis.
\end{proof}

\vspace{0.3cm}
\begin{cor} \label{corr_complexreal}
The matrix ${\bf X_{N}}$ with entries
$$
X_{i,j} = \Bigg( \frac{\sin(\frac{N}{2}(\theta_{i}-\theta_{j}))}{N\sin(\frac{\theta_{i}-\theta_{j}}{2})}\Bigg)
$$
has the same eigenvalues as ${\bf V^{*}V}$.
\end{cor}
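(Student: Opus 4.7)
The plan is to verify the hypothesis of Lemma \ref{lemma_equiv} by an explicit calculation of the entries of $\mathbf{V}^{*}\mathbf{V}$ and then apply the lemma directly. Throughout I take $L=N$ since the corollary concerns the square setting of Section \ref{sec_maxeig}.

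First, I would compute
\[
(\mathbf{V}^{*}\mathbf{V})_{i,j}=\frac{1}{N}\sum_{k=0}^{N-1} e^{jk(\theta_i-\theta_j)}
\]
by expanding the product from the definition \eqref{eqn_Vandermondedefn}. Summing the geometric series and symmetrizing the ratio $(e^{jN\phi}-1)/(e^{j\phi}-1)$ via the standard identity
\[
\frac{e^{jN\phi}-1}{e^{j\phi}-1}=e^{j(N-1)\phi/2}\cdot\frac{\sin(N\phi/2)}{\sin(\phi/2)},
\]
applied with $\phi=\theta_i-\theta_j$, gives
\[
(\mathbf{V}^{*}\mathbf{V})_{i,j}=e^{j(N-1)(\theta_i-\theta_j)/2}\cdot\frac{\sin\bigl(\tfrac{N}{2}(\theta_i-\theta_j)\bigr)}{N\sin\bigl(\tfrac{\theta_i-\theta_j}{2}\bigr)}.
\]

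Next, I would set $c_k:=e^{j(N-1)\theta_k/2}$, which is nonzero for every $k$. Then $c_i/c_j=e^{j(N-1)(\theta_i-\theta_j)/2}$, and the identity above reads exactly
\[
(\mathbf{V}^{*}\mathbf{V})_{i,j}=\frac{c_i}{c_j}\,X_{i,j},
\]
which is precisely the componentwise relation \eqref{eqn_component} of Lemma \ref{lemma_equiv} with $\mathbf{A}=\mathbf{X}_N$ and $\mathbf{B}=\mathbf{V}^{*}\mathbf{V}$. Invoking the lemma yields the equality of spectra, completing the proof.

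There is essentially no obstacle; the only thing to be careful about is the handling of the degenerate case where $\theta_i=\theta_j$, in which the diagonal entry is $1$ for both matrices (using the convention that the sinc-type ratio equals $1$ at zero), so the identification still holds and the vector $c$ has no zero components.
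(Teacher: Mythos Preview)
Your proof is correct and follows essentially the same approach as the paper: compute the entries of $\mathbf{V}^{*}\mathbf{V}$ via the geometric series, factor out the phase $e^{j(N-1)(\theta_i-\theta_j)/2}$ as a ratio $c_i/c_j$, and invoke Lemma~\ref{lemma_equiv}. If anything, your write-up is slightly more explicit in naming the scalars $c_k=e^{j(N-1)\theta_k/2}$ and in addressing the degenerate case $\theta_i=\theta_j$; the only cosmetic issue is the overloaded use of $j$ as both the imaginary unit and a matrix index, which you may want to disambiguate.
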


\begin{proof}
Let $a_{kl}$ be the $(k,l)$ entry of the matrix ${\bf V^{*}V}$ then it is easy to see that
\begin{equation}
 a_{kl}=\frac{1}{N}\sum_{p=0}^{N-1}{e^{ip(\theta_{k}-\theta_{l})}}.
\end{equation}
Hence 
\begin{eqnarray*}
\label{eqn_geometric}
1 + \cdots + e^{-i(N-1)x} & = & \frac{e^{-iNx/2}}{e^{-ix/2}}
\cdot \lb \frac{e^{iNx/2} - e^{-iNx/2}}{e^{ix/2} - e^{-ix/2}} \rb \\
 & = & \frac{e^{-iNx/2}}{e^{-ix/2}} \cdot \frac{\sin(Nx/2)}{\sin(x/2)}.\nonumber
\end{eqnarray*}
Substituting,
$$ 
x = \theta_k - \theta_\ell 
$$
the first term on the RHS of the previous equation is 
$$ 
\frac{e^{iN\theta_\ell/2} e^{i\theta_k/2}}{e^{iN\theta_k/2}e^{i\theta_\ell/2}} 
$$
\noindent and has the form given in Lemma \ref{lemma_equiv}.
\end{proof}

\section{The proof of Lemma \ref{lem_unif_depend}}
\begin{proof}
By definition,
$$
\int_{-\infty}^{+\infty}{g^{(m-p)}(t) g^{(p)}(t)\,dt} = g^{(m)}(0)
$$
\noindent and similarly for $n$. Also the functions $g^{(m_1)}(t),g^{(m_2)}(t)$
increase and decrease together for any integers $m_1,m_2 \geq 0$. It follows
immediately from the FKG inequalities \cite{Alon} that,
$$\int f(t) h(t) \mu(t)\, dt \geq \int f(t) \mu(t) dt \cdot \int h(t) \mu(t)\,dt$$
where $\mu$ is a finite measure and $f,h$ increase and decrease together.
That the result holds, following on substituting 
\begin{eqnarray*}
f(t) & = & g^{(m-p)}(t) \\
h(t) & = & g^{(n-p)}(t) \\
\mu(t) & = & g^{(p)}(t) 
\end{eqnarray*}
\end{proof}

\section{The proof of Theorem \ref{t1}}
\label{sec_prooft1}
\begin{proof}
Let $\rho\in\mathcal{P}(n)$ be a partition of $n$ with $r$ blocks. Let $\omega$ be a probability measure with density $p(x)$. Let
$E_{i}=\sum_{k\in B_{i}}{i_{k-1}-i_{k}}$ where $B_{i}$ is the $i$-th block. We are required to show that,
\begin{equation}
 K_{\rho,\omega}=\lim_{N}{K_{\rho,\omega,N}}
\end{equation}
where 
\begin{eqnarray}
\label{eqn_traceeqn}
K_{\rho,\omega,N} & = & \sum_{{\bf m}}{G^\rho_{N}({\bf m})\prod_{k=1}^{r}{\mathbb{E}(e^{im_{k}\theta_{k}})}}
\end{eqnarray}
where 
$$
{\bf m} :=  (m_{1},\ldots,m_{r})\in\mathbb{Z}_{0}^{r}
$$
with
$$ 
\mathbb{Z}_{0}^{r} :=  \{(z_{1},\ldots,z_{r})\in\mathbb{Z}^{r}\,:\,z_{1}+\ldots+z_{r}=0\}
$$
and
$$
G^\rho_{N}({\bf m})  :=  \frac{N^{r}}{N^{n+1}}\cdot S^{\rho,{\bf m}}_N
$$
and 
$$ 
S^{\rho,{\bf m}}_N := \#\{{\bf i}=(i_1,\ldots,i_n)\,:\,E_{1}=m_{1},\ldots,E_{r}=m_{r}\}.
$$
Note that Equation (\ref{eqn_traceeqn}) is the expected trace evaluated for the partition $\rho$. 

\vspace{0.2cm}
\noindent In the special case where the phases $\theta$'s are uniform distributed on $[-\pi ,\pi]$, the coefficients of $G_N^\rho({\bf m})$ are all 0 except when ${\bf m}$ = 0. It follows then that,
$$
K_{\rho,u} = \lim_{N\rightarrow \infty} G_N^\rho({\bf 0})
$$
is the expansion coefficient of the uniform.  
\vspace{0.2cm}
\noindent To obtain the limit we proceed via the Lebesgue dominated convergence theorem, \cite{Wil91}, and note we
are summing over ${\bf m} \in  \mathbb{Z}_{0}^{r}$. 

\vspace{0.2cm}
\noindent As far as pointwise convergence is concerned, by Lemma \ref{lemma_t1} and for 
any $(m_{1},\ldots,m_{r})\in\mathbb{Z}_{0}^{r}$ the limit exists
$$
\lim_{N}G^\rho_{N}({\bf m})=K_{\rho,u} \in (0,1]
$$
independent of ${\bf m} = (m_{1},\ldots,m_{r})$.

\vspace{0.2cm}
\noindent As far as the dominated part is concerned, first note that
$$
0 \leq G_N^\rho({\bf m}) \leq 1
$$
for all {\bf m} so we only need to show that 
\begin{equation}
\sum_{{\bf m}} \prod_{k=1}^B \vert a(m_k) \vert < \infty,
\end{equation}
where 
$$
a(m):=\mathbb{E}(e^{im\theta_{k}})=\int_{-\pi}^{\pi}{e^{imx}p(x)\,dx},
$$
since the phases $\theta_{1},\ldots,\theta_{N}$ are independent and identically  distributed. By hypothesis 
$a(m)\geq 0$,  thus convergence alone is sufficient. This is demonstrated below. An example of an unbounded pdfs on $[-\pi,\pi]$ which meet the Theorem's conditions is given in Lemma \ref{lemma_logpdf}. 

\vspace{0.2cm}
\noindent Let $\widehat{p}(n)=\frac{1}{2\pi}a(n)$. Using the fact that $p\in\mathcal{L}$ and applying Theorem 15.22 of \cite{cham} repetedly we see that 
\begin{eqnarray*}
\sum_{(m_{1},\ldots,m_{B})\in\mathbb{Z}_{0}^{r}}{\,\,\prod_{k=1}^{r}{a(m_{k})}} & = & \underbrace{a\ast a\ast\ldots\ast a}_{r}\,(0)\\
& = & (2\pi)^{r}\widehat{p^{r}}\,(0)\\
& = & (2\pi)^{r-1}\int_{-\pi}^{\pi}{p^{r}(x)\,dx}.
\end{eqnarray*}
It follows that 
\begin{eqnarray*}
K_{\rho,\omega}=K_{\rho,u}\cdot\sum_{(m_{1},\ldots,m_{r})\in\mathbb{Z}_{0}^{r}}{\,\,\prod_{k=1}^{r}{a(m_{k})}}.
\end{eqnarray*}
Therefore,
$$
K_{\rho,\omega}=K_{\rho,u}\cdot(2\pi)^{r-1}\int_{-\pi}^{\pi}{p(x)^{r}\,dx}.
$$
This finishes the proof.
\end{proof}

\vspace{0.2cm}

\begin{remark}
We observe that (\ref{eqn_dominate}) holds if $\hat{p} \in \ell^1$. This condition however forces the density to be continuous \cite{Edwards}.
\end{remark}

\vspace{0.4cm}
\noindent We now show pointwise convergence for $G_N^\rho({\bf m})$ with ${\bf m}$ fixed. 

\begin{lemma}
\label{lem_partsolve}
Suppose we are given a partition $\rho$ with $n$ variables and $r\geq 2$ blocks
and corresponding partition equations $E_\rho$. Then amongst the
$n$ variables there is a subset 
$$
{\cal F}^\rho_{r-1} \subset \lc 1, \cdots, n \rc
$$
such that $\vert {\cal F}^\rho_{r-1} \vert = r-1$ and for each $f \in {\cal F}^\rho_{r-1}$
$$
M_f = \sum_{k \not{\in} {\cal F}^\rho_{r-1}} F^\rho_{f,k} M_k
$$
where $F^\rho_{f,k} \in \lc -1,0,1 \rc$ and there is one more term with
$1$ than with $-1$. Since the $M_f$ are distinct these equations are
independent. 
\end{lemma}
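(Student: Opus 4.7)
The plan is to reformulate the partition equations $E_\rho$ as the signed vertex-incidence equations of an auxiliary multigraph $G$ on the blocks, and then read off a solution for $r-1$ variables from any spanning tree of $G$. First I would build $G$ with vertex set $\{B_1,\ldots,B_r\}$, putting one edge for each index $\ell$ with $\rho(\ell)\neq \rho(\ell+1)$ (indices taken cyclically modulo $n$), joining $B_{\rho(\ell)}$ and $B_{\rho(\ell+1)}$, and oriented so that $B_{\rho(\ell+1)}$ is the $+1$ endpoint of $M_\ell$ and $B_{\rho(\ell)}$ is the $-1$ endpoint. A direct check using the definition $E_j:\sum_{i\in B_j} M_i = \sum_{i\in B_j} M_{i-1}$ shows that $E_j$ becomes exactly $\sum_{e \ni B_j} s_{e,j} M_e = 0$, with $s_{e,j} \in \{-1,+1\}$ the sign of $B_j$ as an endpoint of $e$; variables $M_\ell$ with $\rho(\ell)=\rho(\ell+1)$ have coefficient $0$ in every equation and will simply receive coefficient $0$ in the final expressions.

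Next I would verify that $G$ is connected, so that it admits a spanning tree with exactly $r-1$ edges. This follows because the cyclic sequence $\rho(1),\rho(2),\ldots,\rho(n),\rho(1)$ visits every block (each block being nonempty), and each transition between distinct blocks is one of the edges constructed above. Fix any spanning tree $T$, take $\mathcal{F}^\rho_{r-1}$ to be the $r-1$ indices labelling its edges, and, for each $f\in T$, remove $f$ from $T$ to split the vertex set into two components $V_u$ and $V_v$. Summing the equations $E_j$ over $B_j\in V_u$ cancels every variable whose edge has both endpoints in $V_u$, leaving
$$s_f M_f + \sum_{\tilde e} s_{\tilde e} M_{\tilde e} = 0,$$
where the sum ranges over non-tree edges $\tilde e$ crossing the cut and $s_e\in\{-1,+1\}$ is the sign of $e$ at its $V_u$-endpoint. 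Solving gives $M_f = -s_f \sum_{\tilde e} s_{\tilde e} M_{\tilde e}$, a linear combination of variables outside $\mathcal{F}^\rho_{r-1}$ with coefficients in $\{-1,0,+1\}$; independence of the $r-1$ resulting equations is immediate since each $M_f$ appears as a unique left-hand side and does not occur on any right-hand side.

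The counting claim is the most delicate point. Since the cyclic walk $\rho(1),\rho(2),\ldots,\rho(n),\rho(1)$ returns to its starting point, its number of transitions from $V_u$ to $V_v$ equals its number of transitions from $V_v$ to $V_u$. Under the orientation convention above, outward transitions correspond to crossing edges of $G$ with $s=-1$ while inward transitions correspond to $s=+1$, so among all edges of $G$ crossing the cut, those with $s=+1$ and those with $s=-1$ are equinumerous. Removing the unique tree edge $f$ from this balance forces the non-tree crossing edges to exhibit an imbalance of exactly one: if $s_f=+1$ there is one more $\tilde e$ with $s_{\tilde e}=-1$, and vice versa. The overall factor $-s_f$ in the formula $M_f=-s_f\sum_{\tilde e} s_{\tilde e} M_{\tilde e}$ then converts this imbalance into exactly one more $+1$ coefficient than $-1$ coefficient in $F^\rho_{f,\cdot}$, as required.

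The main obstacle is not any individual calculation but rather recognizing the graph-theoretic content of the partition equations; once one sees that the coefficient matrix of $E_\rho$ is the signed vertex-edge incidence matrix of $G$, and that the cyclic structure of the index set forces balanced crossings of every cut of $G$, the solvability, the $\{-1,0,+1\}$ coefficient bound, and the counting claim all fall out of standard spanning-tree reasoning.
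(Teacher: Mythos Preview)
The paper actually states this lemma without proof: in Appendix~C, Lemma~\ref{lem_partsolve} is followed immediately by Lemma~\ref{lemma_t1}, with no argument in between (even though the proof of Lemma~\ref{lemma_t1} later refers to ``inspection of the proof of Lemma~\ref{lem_partsolve}''). So there is nothing in the paper to compare your argument against directly.

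Your proposal is correct and complete. Recasting the system $E_\rho$ as the signed vertex--edge incidence relations of a multigraph $G$ on the blocks is exactly the right structural observation: each $M_\ell$ with $\rho(\ell)\neq\rho(\ell+1)$ contributes $+1$ to one block-equation and $-1$ to another, while indices with $\rho(\ell)=\rho(\ell+1)$ cancel and become the zero-coefficient variables. Connectedness of $G$ via the cyclic block sequence $\rho(1),\rho(2),\ldots,\rho(n),\rho(1)$ is immediate, and choosing $\mathcal{F}^\rho_{r-1}$ to be the edges of a spanning tree is the natural move. Your fundamental-cut computation (sum the block equations over one side of the cut determined by deleting a tree edge $f$) correctly expresses $M_f$ solely in terms of non-tree variables with coefficients in $\{-1,0,+1\}$, and the closed-walk balance across the cut gives the ``one more $+1$ than $-1$'' count precisely as you argue. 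Independence is clear since each $M_f$ occurs only on its own left-hand side.

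One cosmetic remark: under the paper's convention $E_j:\sum_{i\in B_j}M_i=\sum_{i\in B_j}M_{i-1}$ written as left-minus-right, $M_\ell$ carries $+1$ at $B_{\rho(\ell)}$ and $-1$ at $B_{\rho(\ell+1)}$, the reverse of your stated orientation. This is a harmless global sign flip (equivalently, you are writing right-minus-left), and your argument is internally consistent, so the conclusion is unaffected.
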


\begin{lemma}\label{lemma_t1} 
Let $G_{N}(m_{1},\ldots,m_{r})$ be defined as before and $(m_{1},\ldots,m_{r})\in\mathbb{Z}_{0}^{r}$. Then $$\lim_{N\to\infty}G_{N}(m_1,\ldots,m_{r})=\lim_{N\to\infty}G_{N}(0,\ldots,0)=K_{\rho,u}.$$
\end{lemma}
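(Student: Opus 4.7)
The plan is to parameterize the solution set of the system $E_1=m_1,\ldots,E_r=m_r$ using Lemma~\ref{lem_partsolve} and reduce $G_N^\rho(\mathbf{m})$ to a Riemann sum whose limit is independent of $\mathbf{m}$. Fix the partition $\rho$ with blocks $B_1,\ldots,B_r$ and the set of free indices $\mathcal{F}^c := \{1,\ldots,n\}\setminus\mathcal{F}^\rho_{r-1}$, so that $|\mathcal{F}^c|=n-r+1$. Solving the (independent) partition equations for the constrained variables $M_f$, $f\in\mathcal{F}^\rho_{r-1}$, yields expressions of the form
$$
M_f \;=\; \sum_{k\in\mathcal{F}^c} F_{f,k}^{\rho}\,M_k \;+\; c_f(\mathbf{m}),
$$
where $F_{f,k}^\rho\in\{-1,0,1\}$ and $c_f(\mathbf{m})$ is a fixed integer linear combination of $m_1,\ldots,m_r$ (hence $O(1)$ in $N$).

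Next, I would count the lattice points exactly: with the variables $M_k$, $k\in\mathcal{F}^c$, ranging freely over $\{0,1,\ldots,N-1\}$, the count $S_N^{\rho,\mathbf{m}}$ equals the number of free-variable tuples for which each constrained variable $M_f$ also lands in $\{0,1,\ldots,N-1\}$:
$$
S_N^{\rho,\mathbf{m}} \;=\; \sum_{\mathbf{M}\in\{0,\ldots,N-1\}^{\mathcal{F}^c}}\;\prod_{f\in\mathcal{F}^\rho_{r-1}}\mathbf{1}\Bigl[\sum_{k\in\mathcal{F}^c} F_{f,k}^\rho M_k + c_f(\mathbf{m})\in\{0,\ldots,N-1\}\Bigr].
$$
Since $N^r\cdot N^{n-r+1}/N^{n+1}=1$, a rescaling $x_k := M_k/N$ converts this into a Riemann sum:
$$
G_N^\rho(\mathbf{m})\;=\;\frac{S_N^{\rho,\mathbf{m}}}{N^{n-r+1}}\;=\;\frac{1}{N^{|\mathcal{F}^c|}}\sum_{\mathbf{M}}\prod_{f\in\mathcal{F}^\rho_{r-1}}\mathbf{1}\Bigl[\sum_{k} F_{f,k}^\rho x_k + \tfrac{c_f(\mathbf{m})}{N}\in [0,1)\Bigr].
$$

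I would then pass to the limit. Since $\mathbf{m}$ is fixed, $c_f(\mathbf{m})/N\to 0$, and the indicator approaches that of the region $R := \{\mathbf{x}\in[0,1]^{\mathcal{F}^c}:\sum_k F_{f,k}^\rho x_k\in[0,1]\;\forall\,f\}$. This region is a polytope with $(n-r+1)$-dimensional Lebesgue-negligible boundary, so standard Riemann-integration arguments give
$$
\lim_{N\to\infty} G_N^\rho(\mathbf{m}) \;=\; \mathrm{Vol}(R),
$$
independently of $\mathbf{m}$. Taking $\mathbf{m}=\mathbf{0}$ in particular, the same volume equals $\lim_N G_N^\rho(\mathbf{0})=K_{\rho,u}$, proving the two limits coincide.

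The main obstacle, and the step meriting most care, is the Riemann-sum convergence when the indicator is translated by the (vanishing but nonzero) amount $c_f(\mathbf{m})/N$: one must verify that only an $O(N^{-1})$ fraction of the lattice cubelets intersect the boundary of the translated polytope so that the translation contributes $o(1)$ to the sum. This is routine because $\partial R$ has finite $(n-r)$-dimensional Hausdorff measure (being a union of finitely many hyperplane pieces), yielding the uniform bound $|G_N^\rho(\mathbf{m})-G_N^\rho(\mathbf{0})|=O(N^{-1})$, more than enough for the pointwise convergence claimed.
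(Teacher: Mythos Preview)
Your proof is correct and follows essentially the same approach as the paper: both use Lemma~\ref{lem_partsolve} to solve for $r-1$ constrained variables in terms of $n-r+1$ free ones, observe that the $\mathbf{m}$-dependent shift is $O(1/N)$ after rescaling, and pass to a limit that is a volume/probability independent of $\mathbf{m}$. The only cosmetic difference is that the paper phrases the limiting step via weak convergence of the rescaled discrete uniforms $U_{N,k}=(M_k-(N-1)/2)/N$ to continuous uniforms on $[-\tfrac12,\tfrac12]$ (citing Billingsley), whereas you phrase it as Riemann-sum convergence to the volume of the polytope $R\subset[0,1]^{n-r+1}$; these are equivalent formulations of the same computation.
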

\begin{proof}
Given a $N\times N$ Vandermonde matrix and a partition $\rho$ of $\lc 1,\cdots,n \rc$ with $r$ blocks there is a corresponding
set of partition equations $E_1=0$, $E_2=0,\ldots, E_r=0$ as before. Lemma \ref{lem_partsolve} shows that there is a set of $r-1$ distinct indexes, ${\cal F}^\rho_{r-1}$, such that for all $v \in {\cal F}^\rho_{r-1}$:
\begin{equation}
\label{eqn_F}
M_v = \sum_{k\not{\in} {\cal F}^\rho_{r-1}} F^\rho_{v,k}M_k.
\end{equation}
Let ${\bf m}$ be a vector in $\in \ZBZ$. Now we want to solve a similar problem but where the equations are $E_1=m_1,\ldots,E_r=m_r$ and ${\bf m}=(m_1,\ldots,m_r)$. Indeed, inspection of the proof of Lemma \ref{lem_partsolve} shows us that  
there exist coefficients $\mu_v$ (which are linear combinations of the $m_k$) such that:
\begin{equation}
M_v +\mu_v = \sum_{k\not{\in} {\cal F}^\rho_{r-1}} F^\rho_{v,k}M_k.
\end{equation}
It is convenient to treat the indexes $M_k$ for $k=1,\ldots,n$ as if they were obtained as random i.i.d. uniform in $\lc 1,\ldots,N\rc$, taking each value with probability $1/N$. Define the event
\begin{equation}
\label{eqn_Edef}
{\cal E}_{{\bf m}} :=\{{\bf i}=(i_1,\ldots,i_n)\,:\,E_{1}=m_{1},\ldots,E_{r}=m_{r}\}
\end{equation}
for ${\bf m} \in \ZBZ$. We therefore obtain the following display,
\begin{eqnarray*}
\prob{{\cal E}_{{\bf m}}} & = & \frac{S^{\rho,{\bf m}}_N}{N^n} \\
   & = & \frac{N \cdot G_N^{\rho}({\bf m})}{N^{r}} \\
                & = & \frac{1}{N^{r-1}}\cdot\mathbb{P}\big(M_v+\mu_v \in I_N + \mu_v \,:\, v\in{\cal F}^\rho_{r-1}\big)
\end{eqnarray*}
where $I_N=\lc 1,\ldots,N\rc$ and $\mu_v$ as before. 

\vspace{0.2cm}
\noindent To complete the Lemma it remains to show that the limit,
\begin{equation}\label{eqlemma}
\lim_{N} \mathbb{P} \Big(M_v+\mu_v \in I_N + \mu_v \,:\, v\in{\cal F}^\rho_{r-1}\Big)
\end{equation}
exists and has the stated value. Define the centered and scaled random variables,
$$
U_{N,k}  :=  \frac{\lb M_k -(N-1)/2\rb}{N}
$$
for every $k$. Since the prelimit vector ${\bf U}_N$ has independent components 
with distribution function, 
$$
F_{N,U}(x) = \frac{\floor{Nx+(N-1)/2}+1}{N} \longrightarrow
x + \frac{1}{2}
$$
as $N$ goes to infintity for every $x\in [-\half,\half]$ we may deduce, \cite{Billingsley68} that, 
$$
{\bf U}_N \Rightarrow  {\bf U} \sim U\Bigg(\Big[-\frac{1}{2},\frac{1}{2}\Big]\Bigg)^n
$$
as $N \rightarrow \infty$. 
\vspace{0.2cm}
\noindent It follows that the limit in Equation (\ref{eqlemma}) is equal to
\begin{equation}
\mathbb{P} \Big(U_{N,v} \in\Big[-\half,\half\Big] + \frac{\mu_v}{N} \,:\, v\in{\cal F}^\rho_{r-1}\Big).
\end{equation}
The latter probability converges to 
\begin{equation}
\mathbb{P} \Big( U_{v} \in \Big[-\frac{1}{2},\half\Big]  \,:\, v\in{\cal F}^\rho_{r-1}\Big)
\end{equation}
as $N \rightarrow \infty$, since the $\mu_v$ are fixed. This limit is independent of ${\bf m}$ and thus determines the expansion
coefficient for the uniform as mentioned earlier. The Lemma is proved.  
\end{proof}

\end{document}